\documentclass[11pt]{article}
\usepackage[utf8]{inputenc}

\usepackage[numbers,sort&compress]{natbib}

\usepackage{geometry}
\usepackage{amsthm,amssymb,amsmath,mathtools}
\usepackage{comment}
\usepackage[spaces,hyphens]{url}

\usepackage{hyperref}
\usepackage{blindtext}
\usepackage{titlesec}
\usepackage{bbm}
\hypersetup{
  colorlinks   = true, 
  urlcolor     = blue, 
  linkcolor    = black, 
  citecolor   = red 
}
\usepackage{pdfpages}
\usepackage[all]{hypcap}
\usepackage[font=small]{caption}
\usepackage[rightcaption]{sidecap}
\usepackage{amsthm}

\theoremstyle{plain}
\newtheorem{theorem}{Theorem}[section]
\newtheorem{lemma}[theorem]{Lemma}
\newtheorem{corollary}[theorem]{Corollary}
\newtheorem{proposition}[theorem]{Proposition}
\theoremstyle{remark}
\newtheorem{remark}[theorem]{Remark}

\theoremstyle{definition}
\newtheorem{definition}[theorem]{Definition}

\numberwithin{equation}{section}

\title{A transform approach to the supercooled Stefan problem}
\author{Thomas Blore\thanks{Mathematical Institute, University of Oxford. \textit{Email:} thomas.blore@maths.ox.ac.uk } \, and Ben Hambly\thanks{Mathematical Institute, University of Oxford. \textit{Email:} ben.hambly@maths.ox.ac.uk }}

\begin{document}
\maketitle
\begin{abstract}
We consider a generalisation of the probabilistic reformulation of the supercooled Stefan problem from \cite{3} allowing for non-integrable initial data. From an application of Ito's formula, we obtain a relationship between the Laplace transform of the initial data and a transform of the solution, together with its possibly discontinuous free boundary. 
Using this relationship we give a sharp existence result for global solutions, we extend previous results on the uniqueness of solutions, and we determine the asymptotic behaviour of solutions for a certain class of initial data.
\end{abstract}

\section{Introduction}
The one-dimensional one-phase supercooled Stefan problem is a model for the freezing of a supercooled liquid that may be formulated as the following free boundary problem for a function $V:\mathbb{R}_+\times\mathbb{R}_+\to \mathbb{R}$ and a free boundary $\Lambda:\mathbb{R}_+\to\mathbb{R}_+$:
\begin{align}
    \partial_t V(t,x) &=\frac{1}{2}\partial_{xx} V(t,x), \quad x>\Lambda_t,\nonumber\\
     \dot{\Lambda}_t &=\frac{1}{2}\partial_x V(t,\Lambda_t),\label{pde}\\
    V(t,\Lambda_t) &=0, \qquad\qquad t\geq 0, &\nonumber\\
    V(0,x) &=g(x),\qquad\;\; x>0.\nonumber
\end{align}

The function $-V(t,x)$ is the temperature at time $t$ and position $x$, where the freezing temperature is taken as zero. The free boundary $\Lambda_t$ is the position of the interface between the solid and liquid phases. We assume that the initial data $g$ is non-negative, so the temperature $-V(0,x)\leq 0$ is below the freezing point, leading to growth of the solid phase. 

For a generic function $g$, classical solutions to \eqref{pde} need not exist, and the PDE may exhibit a blow-up in finite time, in that the derivative of $\Lambda$ ceases to exist, and jump discontinuities in the movement of the free boundary may occur \cite{2}. This causes weak-formulations of the system to be ill-posed. For a large class of initial probability densities, this issue was resolved by Delarue et al. \cite{3} through a reformulation of the system as the solution to a McKean-Vlasov equation, with a ``physical" jump condition enforced at discontinuities.

Further properties of the McKean-Vlasov formulation have been investigated, such as regularity of the free boundary \cite{2,3} and uniqueness of solutions under different conditions \cite{5, mustapha2023wellposednesssupercooledstefanproblem, 3, munoz2026freeboundaryregularitywellposedness}. However, the McKean-Vlasov formulation of \cite{3}  is restricted to initial data $g$ which is integrable. This therefore excludes the classical cases of similarity and travelling wave solutions, and the previously studied behaviour of classical solutions for related initial data \cite{Ricci_Weiqing_1991,4}.

We consider a generalisation of the system studied in \cite{3}, given by the McKean-Vlasov system
\begin{align}
    X^x_t &=x+B_t-\Lambda_t,\nonumber\\
    \tau^x &=\inf\{t\geq 0:X^x_t\leq 0\},\label{alternativetomvr}\\
    \Lambda_t &=\int_0^\infty \mathbb{P}(\tau^x\leq t)g(x)\textrm{d}x.\nonumber
\end{align}
The function $B$ is a standard Brownian motion, and $g$ is a positive, bounded, measurable function, i.e. the Lebesgue density of some positive measure, which may not be finite. 
As in \cite{4}, we also enforce that the free boundary has ``physical" jumps:
\begin{align}
\label{alternativephysical}
 \Delta \Lambda_t=\inf\left\{x>0:\int_0^\infty\mathbb{P}(X_t^y\in[0,x],\tau^y>t)g(y)\textrm{d}y<x\right\}.
\end{align}
We define the corresponding temperature
$$V(t,x):=\int_0^\infty \mathbb{P}(y+B_t\in dx,\tau^y>t)g(y)dy,$$
and similarly 
$$V(t^-,x):=\int_0^\infty \mathbb{P}(y+B_{t}\in dx,\tau^y\geq t)g(y)dy.$$ 

The temperature $V(t,x)$ and corresponding free boundary $\Lambda_t$ form a weak solution to the supercooled Stefan problem \cite{2}. For integrable initial data $g$, it has been further determined that at any time $t$, the temperature $V(t+s,x)$ and free boundary $\Lambda_{t+s}$ form a local-in-time classical solution to the supercooled Stefan problem \cite[Theorem 1.1]{3}. Therefore, the McKean-Vlasov system \eqref{alternativetomvr} can be used to study the supercooled Stefan problem.
We shall refer to $g,$ $V(t^-,.)$ and $V(t,.)$ as densities throughout this work since each corresponds to the density of some positive measure with respect to the Lebesgue measure on $\mathbb{R}^+$. 

We will take a transform approach to the supercooled Stefan problem. 
Let $\phi(x)=e^{-\lambda x}$ and apply Ito's formula to terms of the form 
\[\phi(X_{0^-}+B_t-\Lambda_t)\mathbbm{1}_{\{\inf_{s\leq t}(X_{0^-}+B_s-\Lambda_s)>0\}}. \] 
This gives an expression for the Laplace transform of the temperature $V$, without requiring the existence of a classical solution. 

\begin{proposition}[Laplace transform of solutions]
\label{Ito}
For all $\lambda>0$, any solution to \eqref{alternativetomvr} on the interval $[0,t]$ satisfies the following equation:
\begin{align}&\int_{\Lambda_t}^\infty V(t,x)e^{-\lambda x-\frac{\lambda^2t}{2}}\textrm{d}x+\int_0^\infty (1-g(x))e^{-\lambda x}\textrm{d}x-\frac{1}{\lambda}e^{-\lambda \Lambda_t-\frac{\lambda^2t}{2}}\\&=\frac{\lambda}{2}\int_0^te^{-\lambda\Lambda_{s^-}-\frac{\lambda^2s}{2}}\textrm{d}s+\sum_{s\leq t}e^{-\frac{\lambda^2s}{2}}\left(-\int_{\Lambda_{s^-}}^{\Lambda_s} V(s^-,x)e^{-\lambda x}\textrm{d}x-\frac{1}{\lambda}\Delta e^{-\lambda\Lambda_s}\right).\nonumber
\end{align}
\end{proposition}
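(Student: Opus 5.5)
The plan is to apply optional stopping to the exponential martingale for each fixed starting point, integrate against $g$, and then convert the resulting boundary integral with respect to $\textrm{d}\Lambda$ into the $\textrm{d}s$ and jump terms of the statement. Rather than applying Ito's formula to $\phi(X^y_t)$ with the semimartingale $\Lambda$ inside, it is simpler to work with the unshifted particle position $y+B_t$. Recall that $V(t,\cdot)$ is the density of $y+B_t$ on $\{\tau^y>t\}$, integrated against $g(y)\textrm{d}y$.

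\textbf{Step 1: one particle.} Fix $y>0$. The process $M_t=\exp(-\lambda(y+B_t)-\tfrac{\lambda^2t}{2})$ is a bounded-below martingale. The stopped process $M_{t\wedge\tau^y}$ is bounded by $1$, because $y+B_s>\Lambda_s\ge 0$ before $\tau^y$. Optional stopping therefore gives
\[
e^{-\lambda y}=\mathbb{E}\big[M_t;\tau^y>t\big]+\mathbb{E}\big[e^{-\lambda(y+B_{\tau^y})-\frac{\lambda^2\tau^y}{2}};\tau^y\le t\big].
\]
At the killing time the position is determined by $\Lambda$:
\begin{itemize}
\item if $\Lambda$ is continuous at $\tau^y$, then $y+B_{\tau^y}=\Lambda_{\tau^y}$;
\item if $\Lambda$ jumps at $s=\tau^y$, then $y+B_s\in(\Lambda_{s^-},\Lambda_s]$.
\end{itemize}

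\textbf{Step 2: integrate against $g$.} Multiply by $g(y)$ and integrate over $y\in(0,\infty)$. Tonelli applies since every term is nonnegative and $\int g(y)e^{-\lambda y}\textrm{d}y<\infty$, as $g$ is bounded. This is exactly where non-integrability of $g$ costs nothing. The first term becomes $\int_{\Lambda_t}^\infty V(t,x)e^{-\lambda x-\lambda^2t/2}\textrm{d}x$.

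For the second term, split killing times into continuity times and jump times of $\Lambda$.
\begin{itemize}
\item At continuity times, the killed mass is given by the definition of $\Lambda$ in \eqref{alternativetomvr}: $\int g(y)\mathbb{P}(\tau^y\in \textrm{d}s)\textrm{d}y=\textrm{d}\Lambda_s$. This contribution is therefore $\int_0^t e^{-\lambda\Lambda_s-\lambda^2s/2}\,\textrm{d}\Lambda^c_s$.
\item At a jump time $s$, the mass killed is precisely the mass of $V(s^-,\cdot)$ on $(\Lambda_{s^-},\Lambda_s]$. This contribution is $\sum_{s\le t}e^{-\lambda^2s/2}\int_{\Lambda_{s^-}}^{\Lambda_s}V(s^-,x)e^{-\lambda x}\textrm{d}x$.
\end{itemize}
Making the jump-time identification rigorous is the step I expect to be the main obstacle. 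It requires checking that particles killed at a jump are exactly those with $y+B_{s}\in(\Lambda_{s^-},\Lambda_s]$ and $\tau^y\ge s$. It also requires that the countably many jump times carry no continuous part, so that killed mass at time $s$ is the jump $\Delta\Lambda_s$. I would use the physical jump condition \eqref{alternativephysical} only to ensure that this accounting is consistent: the mass of $V(s^-,\cdot)$ on the interval equals $\Delta\Lambda_s$.

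\textbf{Step 3: integrate by parts.} Since $\Lambda$ is nondecreasing and càdlàg, the Stieltjes change of variables applies to $h(s)=e^{-\lambda\Lambda_s-\lambda^2s/2}$:
\[
h(t)-1=-\lambda\int_0^t h(s)\,\textrm{d}\Lambda^c_s-\frac{\lambda^2}{2}\int_0^t h(s)\,\textrm{d}s+\sum_{s\le t}e^{-\lambda^2s/2}\Delta e^{-\lambda\Lambda_s}.
\]
Here $h(s)$ may be replaced by $h(s^-)$ in the $\textrm{d}s$-integral. Solve this for $\int h\,\textrm{d}\Lambda^c$ and substitute it into the identity from Step 2. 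Then write $\frac1\lambda=\int_0^\infty e^{-\lambda x}\textrm{d}x$ and combine this with $\int g e^{-\lambda x}\textrm{d}x$ to form $\int(1-g)e^{-\lambda x}\textrm{d}x$. Rearranging gives exactly the identity in Proposition \ref{Ito}, with the signs as stated.
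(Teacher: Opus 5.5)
Your proposal is correct and takes essentially the paper's route: optional stopping (the paper calls it Itô's formula) for each particle's exponential martingale, integration over the initial data, a split of the killing term into the continuous part $\textrm{d}\Lambda^c$ and the jump times, and a final integration by parts; you integrate directly against $g(y)\,\textrm{d}y$ and read off $\textrm{d}\Lambda$ from the definition of $\Lambda$, where the paper uses a Poisson point process of intensity $g$ and the compensator of the counting process, which changes nothing essential. The step you flag as the main obstacle is routine, because $\{\tau^y=s\}=\{\tau^y\ge s,\ y+B_s\le \Lambda_s\}$ by right-continuity; moreover your Step 3 never uses the physical jump condition or the mass identity $\int_{\Lambda_{s^-}}^{\Lambda_s}V(s^-,x)\,\textrm{d}x=\Delta\Lambda_s$, so the formula holds for any solution, as stated.
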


We will use simple manipulations of this expression to obtain several new results on the existence, uniqueness, and long-time behaviour of physical solutions to \eqref{alternativetomvr}. 
In Section \ref{altcond} we give a sharp condition for the existence of global-in-time solutions to \eqref{alternativetomvr}, with finitely many jumps. In Section~\ref{uniquenessappendix}, Corollary~\ref{contunique2}, we give a global uniqueness result that extends the work of \cite{mustapha2023wellposednesssupercooledstefanproblem} and does not require the condition that the monotonicity of the initial data changes finitely often on compact sets, as considered in \cite{3}. Our result is also an extension to the case of non-integrable initial data of \cite[Theorem 1.7]{munoz2026freeboundaryregularitywellposedness}, using different arguments. 
 Finally, in Section \ref{asymptoticsection} we give conditions under which $\Lambda_t\sim Ct^\alpha$ for $1/2\leq \alpha\leq 1$, providing a rigorous proof of the work in \cite{4}, and allowing for the presence of discontinuities. 

\begin{remark}
    The system \eqref{alternativetomvr} can be rewritten by replacing the third equation with
$$\Lambda_t =\int_0^\infty \mathbb{P}(\tau^x\leq t)G(\textrm{d}x)$$
for a measure $G$, which need not be absolutely continuous with respect to Lebesgue measure. The results can then be rewritten in terms of properties of $G$. For simplicity we shall only work with $G$ which possesses a bounded Lebesgue density $g$. 
\end{remark}

\section{Global existence of solutions}
\label{altcond}

In this section, we give a sharp condition for the existence of a global-in-time solution with finitely many jumps.

\begin{itemize}
\item Condition (A): 
    $$\limsup_{\lambda\rightarrow 0}\frac{1}{\lambda}\int_0^\infty (1-g(x))e^{-\lambda x}\textrm{d}x=\infty .$$
\end{itemize}
Note that the function $g$ need not be bounded above by one. Thus, the integrand can be negative, and the integral may be negative for some values of $\lambda$.
\begin{theorem}
\label{conda4}
    Suppose that $g$ is a positive, bounded, measurable function that satisfies condition (A). Then a global, physical solution to \eqref{alternativetomvr} with initial density $g$ exists.

    Conversely, suppose that there exists a solution to \eqref{alternativetomvr} with initial data $g$ which is a positive, bounded, measurable function. If at least one solution
    possesses finitely many discontinuities, then the initial density $g$, must satisfy Condition \hyperlink{A}{(A)}.
\end{theorem}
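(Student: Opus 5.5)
The plan is to work from the identity of Proposition~\ref{Ito}. Multiply it by $\lambda$ and use the physical jump condition \eqref{alternativephysical}. The jump terms then have a definite sign: at a jump of size $\Delta\Lambda_s$, the mass of $V(s^-,\cdot)$ on $[\Lambda_{s^-},\Lambda_{s^-}+y]$ is at least $y$ for $y<\Delta\Lambda_s$. Hence
\[
\int_{\Lambda_{s^-}}^{\Lambda_s}V(s^-,x)e^{-\lambda x}\,dx\ \geq\ \int_{\Lambda_{s^-}}^{\Lambda_s}e^{-\lambda x}\,dx=-\tfrac1\lambda\Delta e^{-\lambda\Lambda_s},
\]
which one checks by integrating by parts against the decreasing weight $e^{-\lambda x}$. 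So each jump contributes a nonpositive amount. Moreover, the contribution is of order $\lambda\cdot(\text{excess mass})$, which is $O(\lambda)$ per jump as $\lambda\to0$. The continuous term $\frac{\lambda}{2}\int_0^t e^{-\lambda\Lambda_{s^-}-\lambda^2 s/2}\,ds$ is at most $\lambda t/2$, and the left-hand terms involving $V(t,\cdot)$ and $\Lambda_t$ are controlled on bounded time intervals.

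\emph{Necessity.} Suppose a global solution with finitely many jumps exists. Rearrange the identity to isolate
\[
F(\lambda):=\int_0^\infty(1-g)e^{-\lambda x}\,dx
=\tfrac1\lambda e^{-\lambda\Lambda_t-\lambda^2t/2}-\int_{\Lambda_t}^\infty V(t,x)e^{-\lambda x-\lambda^2t/2}\,dx+\text{RHS}.
\]
I would fix $\lambda$ small, choose $t=t(\lambda)\to\infty$ (for instance $t\asymp\lambda^{-2}$ or larger), and let $t\to\infty$ in the identity. The terms with $e^{-\lambda^2t/2}$ vanish, since $V$ is bounded by $\|g\|_\infty$ and $\Lambda\ge0$. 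This gives
\[
F(\lambda)=\frac\lambda2\int_0^\infty e^{-\lambda\Lambda_{s}-\lambda^2 s/2}\,ds+\sum_{\text{jumps}}(\dots).
\]
The jump sum is bounded by finitely many terms, each $O(1)$. If $\Lambda_s=o(s)$, or more generally if $\Lambda_s\le C+ s\lambda/2$ on the relevant scale, the integral is of order $\lambda\cdot\lambda^{-2}=\lambda^{-1}$. So $F(\lambda)/\lambda\gtrsim\lambda^{-2}\cdot(\ldots)$, and one must show $F(\lambda)/\lambda\to\infty$ along a sequence. Since $\Lambda$ is continuous between finitely many jumps, after the last jump the solution is continuous and grows at most linearly in a controlled way. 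The integral $\int_0^\infty e^{-\lambda\Lambda_s-\lambda^2s/2}\,ds$ is at least $\int_0^{T}e^{-\lambda\Lambda_s}\,ds\cdot e^{-\lambda^2T/2}$ for every $T$. Taking $T=\lambda^{-2}$ and using that $\lambda\Lambda_T$ stays bounded along a subsequence, the integral exceeds $c\lambda^{-2}$ while the jump sum is $O(1)$. This gives $F(\lambda)/\lambda\ge c/\lambda\cdot\lambda\cdot\lambda^{-2}\to\infty$. The delicate point here is the growth of $\Lambda$. If $\Lambda_t\ge ct$ forever, I would instead exploit a mass argument: finite jumps plus linear growth forces $g$ to be close to $1$ on average, and then compare directly.

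\emph{Sufficiency.} Construct solutions by truncation. Set $g_n=g\mathbbm 1_{[0,n]}$, which is integrable, so the existence theory of \cite{3} supplies physical solutions $(\Lambda^n,V^n)$. The identity gives uniform-in-$n$ control of $\Lambda^n_t$ on compact time sets. Suppose $\Lambda^n_t\to\infty$ or blows up by a jump to infinity at some finite $t$. Then, by the identity with the sign of the jump terms, $\frac1\lambda F_n(\lambda)$ would be bounded by $C(t)$ for all small $\lambda$. Using Condition~(A) for $F$ and the convergence $F_n\to F$, this is contradicted, so the $\Lambda^n$ stay locally bounded. Tightness (in the M1 topology, as in \cite{3}) then yields a limit point, and the standard argument of \cite{3} shows the limit is a physical solution of \eqref{alternativetomvr}.

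The main obstacle is the quantitative link in both directions between $\sup_{s\le t}\Lambda_s$, the jump terms, and the small-$\lambda$ behaviour of $F(\lambda)/\lambda$. In particular, one needs to choose $t=t(\lambda)$ correctly. My first attempt would be $t=\lambda^{-2}$, aiming to show that finiteness of $\Lambda$ on $[0,t]$ is equivalent to $\frac1\lambda F(\lambda)$ not being bounded above, after the nonpositive jump contributions are absorbed.
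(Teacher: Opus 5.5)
Your sufficiency argument follows the paper's structure. You truncate, let $t\to\infty$ in Proposition~\ref{Ito} for the integrable data, and drop the nonpositive jump terms to get $F_n(\lambda)\le\frac{\lambda}{2}\int_0^\infty e^{-\lambda\Lambda^n_s-\lambda^2 s/2}\,ds$. If $\Lambda^n_t\to\infty$ along a subsequence, you then let $n\to\infty$ at fixed $\lambda$ to get $F(\lambda)/\lambda\le t/2$, which contradicts (A). Two small corrections are needed:
\begin{itemize}
\item The bound holds only for the limit $F$, not for each $F_n$. Indeed $F_n(\lambda)/\lambda\to\infty$ for every $n$, so you must fix $\lambda$ and send $n\to\infty$ before sending $\lambda\downarrow0$.
\item An M1 limit point of arbitrary physical solutions is a solution, but you have not justified that it is physical. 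The paper avoids this by using minimal solutions: once some solution exists, a minimal one exists by Tarski's theorem, and it is physical.
\end{itemize}

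The necessity part has a genuine gap. Choosing $T=\lambda^{-2}$ and claiming $\lambda\Lambda_{\lambda^{-2}}$ stays bounded along a subsequence amounts to $\liminf_{t\to\infty}\Lambda_t/\sqrt t<\infty$. This fails for the travelling wave $g(x)=1-e^{-2cx}$, where $\Lambda_t=ct$, and for the $Ct^\alpha$ solutions with $\alpha>1/2$ in Section~\ref{asymptoticsection}. Your fallback ``mass argument'' is not an argument. Worse, in that paragraph you bound each jump term only by $O(1)$. When $\Lambda_t\asymp t$, the main term $\frac12\int_0^\infty e^{-\lambda\Lambda_s-\lambda^2 s/2}ds$ is of order $\lambda^{-1}$, the same order as the $O(1)/\lambda$ jump error. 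So no choice of $T(\lambda)$ rescues the estimate.

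The missing idea is that no growth rate is needed once you use, in sharp form, the $O(\lambda)$ jump bound you already stated in your first paragraph. Set $\Phi(y)=\int_{\Lambda_{s^-}}^{\Lambda_{s^-}+y}(1-V(s^-,x))\,dx$.
\begin{itemize}
\item The equation gives $\int_{\Lambda_{s^-}}^{\Lambda_s}V(s^-,x)\,dx=\Delta\Lambda_s$, so $\Phi(0)=\Phi(\Delta\Lambda_s)=0$.
\item Hence $\Phi(y)=\int_y^{\Delta\Lambda_s}(V(s^-,\Lambda_{s^-}+z)-1)\,dz\ge-\Delta\Lambda_s$, using $V\ge0$.
\item Integrating by parts, each jump term equals $\lambda e^{-\lambda^2 s/2}\int_0^{\Delta\Lambda_s}\Phi(y)e^{-\lambda(\Lambda_{s^-}+y)}\,dy\ge-\lambda(\Delta\Lambda_s)^2$.
\end{itemize}
Therefore
\[
\frac{F(\lambda)}{\lambda}\ \ge\ \frac12\int_0^\infty e^{-\lambda\Lambda_s-\lambda^2 s/2}\,ds-\sum_s(\Delta\Lambda_s)^2 .
\]
Since $\Lambda_s<\infty$ for every $s$, the integrand increases to $1$ as $\lambda\downarrow0$, so the integral diverges by monotone convergence. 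Equivalently, fix $T$, bound the integral below by $Te^{-\lambda\Lambda_T-\lambda^2T/2}\to T$, and only then let $T\to\infty$. With finitely many finite jumps the sum is finite, so $F(\lambda)/\lambda\to\infty$. This is the paper's argument; the essential point is to keep $T$ independent of $\lambda$.
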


In order to show this, we first require the notion of a minimal solution. As in the work \cite{minimal}, we shall call a function $f$ a minimal solution of \eqref{alternativetomvr} if, for any solution $\Lambda$ of \eqref{alternativetomvr}, $f_t\leq\Lambda_t$ for all $t\geq 0$. We shall write $\underline{\Lambda}$ to denote a minimal solution to \eqref{alternativetomvr} with initial data $g$ throughout this paper. This minimal solution to the system \eqref{alternativetomvr} can be seen to satisfy the ``physical" jump condition \eqref{alternativephysical} via an identical argument to that of \cite[Theorem 4.7]{ledger2024supercooledstefanproblemtransport}. Furthermore, given the existence of some solution to \eqref{alternativetomvr}, the existence of a minimal solution must follow. To show this, we first note, as in \cite{DIRT1, 2, NS19, minimal}, that solutions to \eqref{alternativetomvr} are equivalent to fixed points of the following operator $\Gamma:D([0,\infty),\mathbb{R})\rightarrow D([0,\infty),\mathbb{R})$:
\begin{align}
    &X_t^{x}(f)=x+B_t-f_t,\nonumber\\
    &\tau^x(f)=\inf\{t\geq 0:X_t^{x}(f)\leq 0\},\label{gammadefn}\\
    &\Gamma(f)_t:=\int_0^\infty \mathbb{P}(\tau^x(f)\leq t)g(x)dx.\nonumber
\end{align}
It is easily seen that $\Gamma$ is a monotone operator on the space of cadlag functions, and that if a fixed point $\Lambda$ exists, $\Gamma(0)\leq \Gamma(\Lambda)=\Lambda$. The existence of a minimal solution then holds by applying Tarski's Theorem as in  \cite[Theorem 4.1]{ledger2024supercooledstefanproblemtransport}. 

To prove the result of Theorem \ref{conda4}, we then consider the value of solutions to \eqref{alternativetomvr} replacing the initial density $g(x)$ by $g(x)\mathbbm{1}_{[0,R]}(x)$. Using Condition \hyperlink{A}{(A)} in the formula of Proposition \ref{Ito}, we obtain a uniform bound on solutions for such an initial density. Taking a limit over $R$, we deduce the existence of a minimal solution to \eqref{alternativetomvr}. 

\begin{proof}

For $R>0$ we define the density $g_R$ by $g_R(x):=g(x)\mathbbm{1}_{[0,R]}(x)$ for $x\geq 0$. A minimal solution to \eqref{alternativetomvr} with this initial condition can be shown to exist via an identical argument to \cite[Theorem 4.1]{ledger2024supercooledstefanproblemtransport}. Let $\Lambda^R$ be the minimal solution to \eqref{alternativetomvr} with initial density $g_R$. Consider an arbitrary $T>0$ and suppose that there exists a constant $C$ such that $$\sup_{R\geq 0}\Lambda_T^{R}\leq  C.$$ 

We define $\Lambda^R(T)\in D([-1,\infty),\mathbb{R})$ via $\Lambda^R(T)_t=\Lambda^R_{t\wedge T}\mathbbm{1}_{\{t\geq 0\}}$. Since we assumed that $\sup_{R\geq 0}||\Lambda^{R}(T)||_\infty\leq C,$ by the arguments of \cite[Theorem 12.12.2]{whitt} the set of functions $\{\Lambda^{R}(T) \textrm{ for }{R\geq 0}\}$ is a pre-compact set under the M1 Skorokhod topology. Therefore, for any sequence $R_m$ such that $R_m\uparrow\infty$ as $m\rightarrow\infty$, there is a further subsequence $R_{m_k}$ such that $\Lambda^{R_{m_k}}(T)$ converges under the Skorokhod M1 topology as $k\rightarrow\infty$. From a slight modification to the arguments of \cite[Proposition 2.1]{minimal}, we can see that $\Lambda^{R_{m_k}}_t(T)$
converges under the M1 Skorokhod topology to some process $\Lambda_t(T)$ that satisfies
\begin{align}\label{a4sol}\Lambda_t(T)=\int_0^\infty \mathbb{P}(\inf_{s\leq t}(x+B_s-\Lambda_s(T))\leq 0) g(x)\textrm{d}x, \hspace{0.4cm} t\leq T.\end{align}
Such a process $\Lambda_t(T)$ is a solution to \eqref{alternativetomvr} on $[0,T]$ and therefore a minimal solution exists on $[0,T]$ from identical arguments to those used for the existence on $[0,\infty)$. 

Therefore, if we can determine that for all $T>0$ there exists a constant $ C$ such that $\sup_{R\geq 0}\Lambda_T^{R}\leq C$, we will obtain the existence of a global-in-time minimal solution, and therefore the existence of a global-in-time physical solution.

To determine the existence of a bound on $\Lambda_T^{R}$ over $R$, we suppose, for a contradiction, that for some fixed $T\geq 0$, there is some sequence $R_k$ such that $\Lambda_T^{ R_k}\rightarrow\infty$. From Proposition \ref{Ito}, taking the limit as $t\rightarrow\infty$, it follows that
\begin{align*}
&\int_0^\infty (1-g_{R_k}(x))e^{-\lambda x}\textrm{d}x\\
&=\frac{\lambda}{2}\int_0^\infty e^{-\lambda \Lambda^{R_k}_{t^-}-\frac{\lambda^2t}{2}}dt+\sum_{\Delta \Lambda^{R_k}_t>0}e^{-\frac{\lambda^2t}{2}}\left(\int_{\Lambda^{R_k}_{t^-}}^{\Lambda^{R_k}_t}\left(-e^{-\lambda x}\right) V(t^-,x)\textrm{d}x-\frac{1}{\lambda}\Delta e^{-\lambda \Lambda^{R_k}_t}\right).\end{align*}
In this expression, the density $V(t^-,.)$ corresponds to the previous definition, using the initial data $g_{R_k}$.

By the physical jump condition, $\int_{\Lambda^{R_k}_{t^-}}^{\Lambda^{R_k}_{t^-}+x} V(t^-,y)\textrm{d}y\geq x$ for $x\leq \Delta \Lambda^{R_k}_t$. Therefore, since $e^{-\lambda x}$ is decreasing, each term in the sum is bounded above by $$-\int_{\Lambda^{R_k}_{t^-}}^{\Lambda^{R_k}_t} e^{-\lambda x}\textrm{d}x-\frac{1}{\lambda}\Delta e^{-\lambda \Lambda^{R_k}_t}=0.$$
It follows that
\begin{align} \label{eq:gbd}
    &\int_0^\infty (1-g_{R_k}(x))e^{-\lambda x}\textrm{d}x
    \leq \frac{\lambda}{2}\int_0^\infty e^{-\lambda \Lambda^{R_k}_{t^-}-\frac{\lambda^2t}{2}}dt.
\end{align}
        
For an arbitrary $\lambda>0$, consider the process $h(\lambda,R_k)_t=e^{-\lambda \Lambda^{R_k}_t}$. This is bounded between 0 and 1, and monotone decreasing. Using identical extension arguments as for $\Lambda^R(T)$, we obtain extensions of each of these processes to $D([-1,\infty),\mathbb{R})$, so that the set of such processes is pre-compact under the M1 Skorokhod topology. We may therefore obtain a subsequence of $h(\lambda,R_k)_t$ which converges Lebesgue almost everywhere to a cadlag function $h(\lambda)_t$. This function is bounded above by $1$ and, by the assumption that $\Lambda_T^{ R_k}\rightarrow\infty$, we have $h(\Lambda)_t=0$ for $t\geq T$.

Applying the Dominated Convergence Theorem in \eqref{eq:gbd} and using that $h(\lambda)_t\leq 1$, it holds that
$$ \int_0^\infty ( 1-g(x))e^{-\lambda x}\textrm{d}x\leq \frac{\lambda}{2} \int_0^T h(\lambda)_te^{-\frac{\lambda^2t}{2}}dt \leq \frac{\left(1-e^{-\frac{\lambda^2T}{2}}\right)}{\lambda}.$$
Since $\lambda>0$ was arbitrary, dividing both sides by $\lambda>0$, and taking the limsup over $\lambda \downarrow 0$, we have $\frac{T}{2}\geq \infty$, a contradiction. Therefore, the assumption that $\Lambda_T^{R_k}\rightarrow\infty$ cannot hold, and we have proved the first part of the theorem.

To prove the second part of the theorem, we again consider the expression given in Proposition \ref{Ito}. Bounding the sum term above, we obtain
\begin{align*}
    &\frac{1}{\lambda }\int_0^\infty (1-g(x))e^{-\lambda x}\textrm{d}x\geq \frac{1}{2}\int_0^\infty e^{-\lambda \Lambda_{t}-\frac{\lambda^2t}{2}}dt-\sum_{\Delta\Lambda_t>0}(\Delta\Lambda_t)^2-{}O_{\lambda \downarrow 0}(\lambda).
\end{align*}
Applying Fatou's lemma, it is simple to see that the right hand side grows to infinity as $\lambda\downarrow 0$, whenever $\sum_{\Delta\Lambda_t>0}(\Delta\Lambda_t)^2$ is finite.
\end{proof}

\section{Uniqueness}

\label{uniquenessappendix}
In this section, we give some conditions for the global uniqueness of physical solutions to \eqref{alternativetomvr}. We assume throughout that a global-in-time minimal solution to \eqref{alternativetomvr} exists.

\begin{theorem}
\label{contunique}
    Suppose that there exists $\epsilon>0$ such that the minimal solution $\underline{\Lambda}$ is continuous on $(0,\epsilon)$. Then the physical solution to \eqref{alternativetomvr} is unique.
\end{theorem}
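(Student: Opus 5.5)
Our plan is to compare an arbitrary physical solution $\Lambda$ with the minimal solution $\underline{\Lambda}$ through the transform identity of Proposition \ref{Ito}. Minimality gives $\underline{\Lambda}_t\le \Lambda_t$ for all $t$, and we aim to show that equality holds. Both solutions share the initial data $g$. Subtracting the identities of Proposition \ref{Ito} for the two solutions at a fixed $\lambda>0$ and a fixed horizon $t$, the term $\int_0^\infty(1-g)e^{-\lambda x}dx$ cancels. This leaves an identity relating
\begin{itemize}
\item the difference of the transformed temperatures $\int V(t,x)e^{-\lambda x}dx$,
\item the boundary terms $\frac1\lambda(e^{-\lambda\underline\Lambda_t}-e^{-\lambda\Lambda_t})e^{-\lambda^2t/2}$,
\item the drift integrals $\frac{\lambda}{2}\int_0^t(e^{-\lambda\Lambda_{s^-}}-e^{-\lambda\underline\Lambda_{s^-}})e^{-\lambda^2 s/2}ds$,
\item the jump sums.
\end{itemize}
For $\underline\Lambda$ the jump sums vanish on $(0,\epsilon)$ by continuity. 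For $\Lambda$ each jump term is nonpositive, by the argument used in the proof of Theorem \ref{conda4} based on the physical jump condition \eqref{alternativephysical}.

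\textbf{Step 1: uniqueness on $[0,\epsilon)$.} First we show $\Lambda=\underline\Lambda$ there. We couple both systems on the same Brownian motion. Since $\Lambda\ge\underline\Lambda$, every particle killed under $\underline\Lambda$ is also killed under $\Lambda$. Hence the density under $\Lambda$ is dominated, $V^{\Lambda}(t,\cdot+\Lambda_t-\underline\Lambda_t)\le V^{\underline\Lambda}(t,\cdot)$ pathwise in the shifted frame, and the mass lost satisfies
\[
\Lambda_t-\underline\Lambda_t=\int_0^\infty \mathbb P\big(\tau^x(\Lambda)\le t<\tau^x(\underline\Lambda)\big)g(x)dx .
\]
Set $\delta_t=\sup_{s\le t}(\Lambda_s-\underline\Lambda_s)$. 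The particles counted above lie, at the time they are killed under $\Lambda$, within distance $\delta_t$ of the boundary in the $\underline\Lambda$-system. We would bound their mass by $\int_0^{\delta_t}V^{\underline\Lambda}(s,\underline\Lambda_s+y)dy$ and use continuity of $\underline\Lambda$ on $(0,\epsilon)$ to show this is $o(\delta_t)$. Continuity implies the physical jump condition is never triggered, so $\int_0^{x}V^{\underline\Lambda}(s,\underline\Lambda_s+y)dy<x$ for small $x$, strictly and uniformly on compacts of $(0,\epsilon)$. This yields a contraction $\delta_t\le c\,\delta_t$ with $c<1$, hence $\delta\equiv0$.

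Near $t=0$ we need a separate argument. One option is to exploit the jump terms in Proposition \ref{Ito}, letting $\lambda\to\infty$ to localise near the boundary. The other is to observe that a jump of $\Lambda$ at $0$ would, through \eqref{alternativephysical}, force a jump of $\underline\Lambda$ at $0$ by comparison. That conflicts with $\underline\Lambda$ being the minimal fixed point of the monotone map $\Gamma$, whose iterates from $0$ are continuous in time.

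\textbf{Step 2: propagation to all times.} Let $t^*=\sup\{t:\Lambda=\underline\Lambda\text{ on }[0,t]\}\ge\epsilon$, and suppose $t^*<\infty$. Restarted at $t^*$, both solutions solve \eqref{alternativetomvr} with initial density $V(t^*,\cdot+\Lambda_{t^*})$, and this density is smoothed by the heat kernel. We would show that the minimal solution from a time $t^*>0$ cannot start with a jump. If it did, the maximal jump and the minimal jump would coincide as the infimum in \eqref{alternativephysical}, with continuation continuous immediately afterwards. Either way we reduce to Step 1 and contradict the definition of $t^*$. If $\underline\Lambda$ does jump at $t^*$, both solutions take the same physical jump, since that jump is determined by the common left-limit density. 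After this we restart, and the local continuity of the minimal solution after a physical jump lets us repeat the argument.

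\textbf{Main obstacle.} The hard step is the quantitative estimate in Step 1: controlling the boundary mass $\int_0^{\delta}V(s,\underline\Lambda_s+y)dy$ by $c\delta$ with $c<1$, uniformly in $s$. Continuity alone gives only non-strict inequality pointwise. We would try first to upgrade it using the strict inequality from \eqref{alternativephysical} together with lower semicontinuity in $s$, combined with a Gronwall-type iteration on the Laplace-transform identity.
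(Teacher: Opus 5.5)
There is a genuine gap, and it is the one you flag yourself in Step 1. The contraction $\delta_t\le c\,\delta_t$ with $c<1$ does not follow from continuity of $\underline{\Lambda}$.

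\begin{itemize}
\item A zero jump in \eqref{alternativephysical} only says that $\int_0^x V(s^-,\underline{\Lambda}_s+y)\,\textrm{d}y<x$ for \emph{arbitrarily small} $x$. It does not give this for all small $x$, and it gives no uniform $c<1$.
\item The quantity $\Lambda_t-\underline{\Lambda}_t$ is the mass of particles whose running minimum of $X^{\underline{\Lambda}}$ on $[0,t]$ falls in $(0,\delta_t]$. This mass accumulates over the whole window, so it is not controlled by a strip mass $\int_0^{\delta_t}V(s,\underline{\Lambda}_s+y)\,\textrm{d}y$ at a single time.
\item Contraction-type local uniqueness arguments of this kind, as in \cite{3}, need extra structure on $g$, such as finitely many monotonicity changes. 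The theorem is designed to avoid exactly that.
\end{itemize}

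Your second option at $t=0$ is also wrong. $\underline{\Lambda}$ can jump at time $0$ (take $g>1$ near $0$), and a limit of continuous iterates of $\Gamma$ need not be continuous. The correct point is that \eqref{alternativephysical} fixes the same time-$0$ jump for every physical solution, so you simply restart after it.

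The missing idea is that the subtracted identity you already wrote down closes the argument by itself as $\lambda\to\infty$, once it is normalised correctly. Your first option at $t=0$ points this way, but it handles all of Step 1, not just $t=0$.

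\begin{itemize}
\item After restarting, suppose $\Lambda_0=\underline{\Lambda}_0$, $\underline{\Lambda}$ is continuous on $[0,t_2]$, and $\Lambda>\underline{\Lambda}+\epsilon'$ on some $(t_1,t_2)$.
\item Multiply the difference of the two identities of Proposition \ref{Ito} on $[0,t_2]$ by $e^{\lambda\underline{\Lambda}_{t_2}+\lambda^2t_2/2}$.
\item Both temperatures are bounded by $||g||_\infty$ and $\Lambda_{t_2}\ge\underline{\Lambda}_{t_2}$, so the left side is $O(1/\lambda)$.
\item On the right, $\underline{\Lambda}$ contributes no jump terms. The jump terms of $\Lambda$ enter with a nonnegative sign, by the physical jump condition. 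The drift difference is nonnegative by minimality.
\item The drift term alone is at least $\frac{\lambda}{2}(1-e^{-\lambda\epsilon'})\int_{t_1}^{t_2}e^{\lambda^2(t_2-s)/2}\,\textrm{d}s\to\infty$, which is a contradiction.
\end{itemize}

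This is Proposition \ref{uniquecont}; no quantitative estimate near the boundary is needed.

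Step 2 also has an unproved step. You assert that $\underline{\Lambda}$ is continuous immediately after a physical jump at $t>0$, but you never justify it. Your claim that the minimal solution ``cannot start with a jump'' at $t^*>0$ is false as stated. The paper gets the needed continuity from the smoothness of $x\mapsto V(t^-,\Lambda_t+x)$ (Proposition \ref{analyticity}). Combined with the argument of \cite[Theorem 3.3]{5}, this gives $V(t^-,\Lambda_t+x)\le 1$ and $\int_0^x(1-V(t^-,\Lambda_t+y))\,\textrm{d}y>0$ near $0$. That yields continuity on $[t,t+\epsilon'_t)$, after which Proposition \ref{uniquecont} propagates uniqueness.
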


We shall first give several corollaries to this result before giving the proof.

\begin{corollary}
\label{contunique2}
    Suppose that $\int_{0}^xg(y)\textrm{d}y\leq  x$ for all $x>0$, and that a global-in-time minimal solution to \eqref{alternativetomvr} exists. Then the physical solution to \eqref{alternativetomvr} is continuous on $(0,\infty)$ and is unique.
\end{corollary}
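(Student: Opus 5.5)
The plan is to reduce Corollary~\ref{contunique2} to Theorem~\ref{contunique}. It suffices to show that, under $\int_0^x g(y)\,\textrm{d}y\leq x$ for all $x>0$, the minimal solution $\underline{\Lambda}$ (which exists by assumption and satisfies the physical jump condition \eqref{alternativephysical}) has no jumps on $(0,\infty)$. Theorem~\ref{contunique} then gives uniqueness of the physical solution, and the unique physical solution is $\underline{\Lambda}$ itself, hence continuous on $(0,\infty)$. A jump at time $0$ is also ruled out by this argument, since \eqref{alternativephysical} at $t=0$ reads $\Delta\Lambda_0=\inf\{x>0:\int_0^x g<x\}$.

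The key step is to propagate the hypothesis in time. I would show that for every $t>0$ and every $x>0$,
\begin{align*}
\int_{\Lambda_{t^-}}^{\Lambda_{t^-}+x} V(t^-,y)\,\textrm{d}y< x .
\end{align*}
By \eqref{alternativephysical}, this forces $\Delta\Lambda_t=0$. The intuition is that $V(t^-,\cdot)$ is the density of the surviving particles. Each particle started from $y$ sits at $y+B_t$ and has been killed if it crossed the moving boundary. So the mass in $[\Lambda_{t^-},\Lambda_{t^-}+x]$ is bounded by the mass of the initial data transported by a Brownian shift and then killed on a region that has already been swept. Concretely, I would write
\begin{align*}
\int_{\Lambda_{t^-}}^{\Lambda_{t^-}+x}V(t^-,y)\,\textrm{d}y=\mathbb{E}\int_0^\infty \mathbbm{1}_{\{X^z_{t^-}\in[0,x],\,\tau^z\geq t\}}g(z)\,\textrm{d}z .
\end{align*}
Conditionally on $B$, the surviving $z$ form the set $z>\sup_{s<t}(\Lambda_s-B_s)=:M$. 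Survival also requires $z\in[\Lambda_{t^-}-B_t,\Lambda_{t^-}-B_t+x]$. Since $M\geq \Lambda_{t^-}-B_{t^-}$ and $B$ is continuous, the integration domain is contained in $[M, M+x]$ intersected with $(0,\infty)$.

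The main obstacle is that the hypothesis controls only $\int_0^x g$, i.e.\ mass near the origin, not $\int_M^{M+x}g$ for arbitrary $M$. So I need an inequality comparing the $[M,M+x]$ window to the $[0,x]$ window, and this requires more than the pathwise bound. I would first try an induction or continuity argument on the first jump time $t^*=\inf\{t:\Delta\underline\Lambda_t>0\}$. On $[0,t^*)$ the boundary is continuous. A supermartingale or Itô argument, as in Proposition~\ref{Ito} with a test function $\phi$ replaced by indicators $\mathbbm{1}_{[0,x]}$ (smoothed), should show that the quantity $F_t(x):=\int_{\Lambda_t}^{\Lambda_t+x}V(t,y)\,\textrm{d}y-x$ satisfies $F_t(x)\leq 0$ and, by the strong maximum principle for the heat equation with absorbing boundary, in fact $F_t(x)<0$ for $t>0$. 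The point is that the boundary absorbs mass at rate $\dot\Lambda$ exactly matching the shift, so the condition $F\leq 0$ is preserved: at a point where $F_t(x)=0$, the flux $\frac12\partial_xV(t,\Lambda_t)$ equals $\dot\Lambda_t$ and diffusion only lowers the cumulative mass. The strict inequality at $t^*$ then contradicts the jump condition, giving $t^*=\infty$. If this maximum-principle route proves delicate for non-integrable $g$, I would fall back on truncating $g$ to $g_R=g\mathbbm{1}_{[0,R]}$, which still satisfies the hypothesis, and applying known results for integrable data (e.g.\ \cite{mustapha2023wellposednesssupercooledstefanproblem}). I would then pass $R\to\infty$ using monotonicity of the minimal solutions $\Lambda^R\uparrow\underline\Lambda$, as in the proof of Theorem~\ref{conda4}, while checking that continuity survives the limit via equicontinuity of the $\Lambda^R$ on compacts.
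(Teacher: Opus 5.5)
Your reduction to Theorem~\ref{contunique} is sound. However, the step that carries all the weight, namely that $\underline{\Lambda}$ has no jumps at positive times, is not established by either of your routes.

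The maximum-principle argument presupposes that, between jumps, $\underline{\Lambda}$ is differentiable and $V$ is a classical solution up to the boundary. Only then does $W(t,x)=\int_0^x V(t,\Lambda_t+y)\,\textrm{d}y-x$ solve $\partial_t W=\frac12\partial_{xx}W+\dot\Lambda_t\partial_xW$ with $W(t,0)=0$. No such regularity is available in this McKean--Vlasov setting with non-integrable $g$. Even granting it, the induction on $t^*=\inf\{t>0:\Delta\underline{\Lambda}_t>0\}$ breaks down in two places:
\begin{itemize}
\item It assumes a continuity interval before a first positive jump. This fails if jump times accumulate, e.g.\ at $0$, which is precisely the scenario that threatens uniqueness.
\item As $t\uparrow t^*$ the drift $\dot\Lambda_t$ may blow up. So $W(t,\cdot)<0$ for $t<t^*$ only yields $W(t^{*-},\cdot)\leq 0$ in the limit, which is exactly the borderline case compatible with a jump under \eqref{alternativephysical}.
\end{itemize}

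The fallback has the same gap in a different place. An increasing limit of continuous $\Lambda^R$ need not be continuous, and ``equicontinuity of $\Lambda^R$ on compacts'' is the entire content of the claim, with no estimate offered. Moreover, \cite[Theorem 1.1]{mustapha2023wellposednesssupercooledstefanproblem} needs more than $\int_0^xg\leq x$; the paper notes that data satisfying this hypothesis lie outside it. So it cannot simply be applied to each $g_R$.

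Separately, your claim that a jump at time $0$ is excluded is false. For $g=\mathbbm{1}_{[0,a]}$ one has $\int_0^xg\leq x$ but $\Delta\Lambda_0=a$, which is why the statement only asserts continuity on $(0,\infty)$. This is harmless for uniqueness, since Theorem~\ref{contunique} only needs continuity on $(0,\epsilon)$.

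The missing idea is to pass to the limit not continuity but the Laplace identity.
\begin{itemize}
\item The paper approximates $g$ from below by piecewise continuous $g_n^k$ supported in $[\frac1n,n]$ with $\int_0^xg_n^k<x$. For these, \cite[Theorem 1.1]{PDEresult} gives global classical, jump-free solutions $\Lambda^{n,k}$.
\item Proposition~\ref{Ito} with $t\to\infty$ then reduces to $\frac1\lambda-\int_0^\infty g_n^k(x)e^{-\lambda x}\textrm{d}x=\frac\lambda2\int_0^\infty e^{-\lambda\Lambda^{n,k}_s-\frac{\lambda^2s}{2}}\textrm{d}s$. This identity sees $\Lambda$ only through a time integral, so it survives the a.e.\ convergence $\Lambda^{n,k}\to\underline{\Lambda}$ by dominated convergence.
\item Compare with Proposition~\ref{Ito} for $\underline{\Lambda}$. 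There each jump contributes $-e^{-\frac{\lambda^2s}{2}}\int_{\Lambda_{s^-}}^{\Lambda_s}(V(s^-,x)-1)e^{-\lambda x}\textrm{d}x\leq 0$ by the physical jump condition. Hence every jump term must vanish for all $\lambda>0$, so $V(s^-,\cdot)=1$ a.e.\ on each jump interval.
\item Proposition~\ref{analyticity} propagates this to $V(s^-,\cdot)\equiv 1$, forcing an infinite jump and contradicting global existence. Uniqueness then follows from Proposition~\ref{uniquecont}.
\end{itemize}
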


The condition given in the statement of this proposition is nearly that of \cite[Theorem 1.1]{PDEresult} under which classical solutions to the supercooled Stefan problem exist. Using approximations of the density, it is simple to check that the minimal solution to \eqref{alternativetomvr} is continuous on $(0,\infty)$. Details are given at the end of this section.

The uniqueness results of \cite{3} extend immediately to allow for $g\notin L^1(\mathbb{R}^+)$.
\begin{corollary}
    Let $x^*=\inf\{x>0:\int_0^xg(y)<y\}$. Suppose that $g$ changes monotonicity finitely often on $[x^*,x^*+\epsilon]$ for some $\epsilon>0$. Then the physical solution to \eqref{alternativetomvr} is unique.
\end{corollary}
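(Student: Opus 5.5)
The plan is to reduce the statement to Theorem~\ref{contunique}: it is enough to show that the minimal solution $\underline{\Lambda}$ is continuous on $(0,\epsilon)$ for some $\epsilon>0$. The argument from \cite{3} is local near the point $x^*$ where blow-up can first occur, and only needs information about $g$ on $[0,x^*+\epsilon]$. So I would localise. Take $g_R=g\mathbbm{1}_{[0,R]}$ with $R>x^*+\epsilon$, which is integrable, and compare the minimal solution $\underline{\Lambda}$ for $g$ with the minimal solution $\Lambda^R$ for $g_R$ on a short time interval.

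\textbf{Step 1 (uniqueness and continuity of the truncated problem).} For $g_R\in L^1$, the function $x^*$ defined from $g_R$ equals $x^*$ defined from $g$, and $g_R$ changes monotonicity finitely often on $[x^*,x^*+\epsilon]$. Hence the local results of \cite{3} apply. These give that the physical solution started from $g_R$ is unique and, by their regularity analysis (at most the initial jump of size $x^*$, then continuity on a short interval $(0,\delta)$), continuous on $(0,\delta)$. If \cite{3} needs probability densities, I would rescale the mass, since the system is linear in $g$ up to time-change conventions. Otherwise I would just quote their argument, which only uses integrability.

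\textbf{Step 2 (comparing $\underline\Lambda$ with $\Lambda^R$ for short times).} Since $\Gamma$ is monotone in $g$, we have $\Lambda^R\le\underline{\Lambda}$. The difference $\underline\Lambda_t-\Lambda^R_t$ on $[0,t]$ is at most
\[
\int_R^\infty \mathbb{P}\bigl(\tau^x(\underline\Lambda)\le t\bigr)\,g(x)\,\mathrm{d}x \le \|g\|_\infty\int_R^\infty \mathbb{P}\Bigl(\inf_{s\le t}B_s\le \underline\Lambda_t-x\Bigr)\,\mathrm{d}x,
\]
which is tiny as long as $\underline\Lambda_t$ stays much smaller than $R$. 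I would choose $R$ large and use finiteness of $\underline\Lambda$ (a global minimal solution exists) to bound $\underline\Lambda_\delta$. Then I would run the continuity/uniqueness argument of \cite{3} on $g$ directly, treating the far mass $g\mathbbm{1}_{(R,\infty)}$ as a smooth, small, continuous forcing added to $\Lambda$. This forcing is continuous because particles from beyond $R$ hit the boundary with a density in $t$ and cannot create jumps: by the physical jump condition \eqref{alternativephysical}, a jump at time $t$ requires mass at least $x$ in $[0,x]$, and their contribution near $0$ is $O(e^{-c(R-\Lambda)^2/t})$.

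\textbf{Step 3 (conclusion).} Steps 1–2 give continuity of $\underline\Lambda$ on some $(0,\epsilon')$. Theorem~\ref{contunique} then yields global uniqueness of the physical solution.

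The main obstacle is Step 2. The continuity mechanism of \cite{3} (a monotonicity/comparison argument near the boundary) must survive the addition of the non-integrable tail. I would first try to show directly that the jump criterion \eqref{alternativephysical} for $g$ and for $g_R$ coincide up to a vanishing error on $(0,\delta)$. The argument would be that any jump of $\underline\Lambda$ in $(0,\delta)$ would force, via the physical condition and the small tail contribution, a jump of a solution of the truncated problem with slightly perturbed data. That would contradict the continuity of \cite{3}, which is stable under small smooth perturbations near the boundary.
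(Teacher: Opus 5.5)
Your outline (local continuity from \cite{3}, then globalise via Theorem~\ref{contunique}) is reasonable, but the localisation in Step~2 fails, and Step~2 is where all the content lies.

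\textbf{The Step~2 estimate goes the wrong way.} Since $\Lambda^R\le\underline{\Lambda}$ gives $\tau^x(\underline{\Lambda})\le\tau^x(\Lambda^R)$, you actually have
\[
\underline{\Lambda}_t-\Lambda^R_t=\int_R^\infty \mathbb{P}\bigl(\tau^x(\underline{\Lambda})\le t\bigr)g(x)\,\mathrm{d}x+\int_0^R\Bigl[\mathbb{P}\bigl(\tau^x(\underline{\Lambda})\le t\bigr)-\mathbb{P}\bigl(\tau^x(\Lambda^R)\le t\bigr)\Bigr]g(x)\,\mathrm{d}x .
\]
The second integral is non-negative, so the tail term only bounds the difference from below. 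That second integral is the feedback of the extra push on the particles started in $[0,R]$. This is exactly the self-excitation that produces blow-ups, and nothing makes it small. Even monotone convergence of $\Lambda^R$ to $\underline{\Lambda}$ would not transfer continuity, since increasing limits of continuous functions can jump.

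\textbf{The ``small smooth forcing'' picture has the same problem.} Write $\underline{\Lambda}_t=\int_0^R\mathbb{P}(\inf_{s\le t}(x+B_s-\underline{\Lambda}_s)\le 0)g(x)\,\mathrm{d}x+h_t$.
\begin{itemize}
\item The forcing $h$ is not exogenous: it is determined by $\underline{\Lambda}$ and inherits a small jump from any jump of $\underline{\Lambda}$. Its continuity is therefore part of what you are trying to prove.
\item \cite{3} says nothing about such forced systems.
\item Your jump-criterion remark only shows that the far particles cannot cause a jump on their own. A jump is collective: an exponentially small extra contribution near the boundary can tip a near-critical bulk over the threshold in \eqref{alternativephysical}, for instance when $g(x^*+x)\to 1$ as $x\downarrow 0$.
\item There is no ``truncated problem with slightly perturbed data'' whose jumps correspond to those of $\underline{\Lambda}$. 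The bulk particles in the full system have been killed along $\underline{\Lambda}$, not along any truncated boundary.
\item The asserted stability of the continuity in \cite{3} under such perturbations is not available.
\end{itemize}

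\textbf{What the paper does instead.} It does no truncation at all. It asserts that the local argument of \cite[Proposition 5.2]{3} carries over verbatim to non-integrable $g$, giving uniqueness on a short time interval. It then applies the extension part of the proof of Theorem~\ref{contunique}: smoothness of $V(t^-,\cdot)$ for $t>0$ from Proposition~\ref{analyticity} gives a continuity window after each later jump, and Proposition~\ref{uniquecont} then gives global uniqueness.

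So the last sentence of your Step~2, ``run the argument of \cite{3} on $g$ directly'', is the actual proof. What must be checked is that each step of that local argument is insensitive to mass far from $x^*$. The perturbative comparison with $g_R$ should be dropped.

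Note also that the paper takes only local uniqueness from \cite{3}. Routing through the hypothesis of Theorem~\ref{contunique}, as you do, requires the stronger statement that $\underline{\Lambda}$ is continuous on some $(0,\epsilon')$.
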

Uniqueness on a small time interval can be established exactly as in \cite[Proposition 5.2]{3}, after which the arguments of the proof of Corollary \ref{contunique} may be applied.

We finally give a condition which requires only regularity of the data near zero, and without monotonicity change constraints.
\begin{corollary}
\label{contunique3}
Suppose that there exists $\epsilon>0$ such that $g(x)\leq 1$ for $x<\epsilon$ and $\inf\{x:\int_0^xg(y)\textrm{d}y<x\}=0$. Then the physical solution is unique.
\end{corollary}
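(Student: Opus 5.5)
The plan is to reduce Corollary~\ref{contunique3} to Theorem~\ref{contunique}. It suffices to show that the minimal solution $\underline{\Lambda}$ is continuous on $(0,\epsilon')$ for some $\epsilon'>0$. We use a comparison argument with a truncated density lying below the density of Corollary~\ref{contunique2}.

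\textbf{Step 1: continuity at time zero.} The hypothesis $\inf\{x:\int_0^x g(y)\,\textrm{d}y<x\}=0$ means that the physical jump condition \eqref{alternativephysical} at $t=0$ gives $\Delta\Lambda_0=0$. So the minimal solution starts continuously, and it is right-continuous at $0$ with $\underline{\Lambda}_0=0$. In particular, for any $\delta>0$ there is $t_0>0$ with $\underline{\Lambda}_t<\delta$ for $t<t_0$.

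\textbf{Step 2: excluding jumps for small times.} Suppose $\underline{\Lambda}$ jumps at some time $t\in(0,t_0)$. By the physical jump condition, for $x\le\Delta\underline{\Lambda}_t$,
\[
\int_{\underline{\Lambda}_{t^-}}^{\underline{\Lambda}_{t^-}+x} V(t^-,y)\,\textrm{d}y\ge x .
\]
To bound the left side, split the initial mass into particles started in $[0,\epsilon)$ and those started in $[\epsilon,\infty)$.
\begin{itemize}
\item On $[0,\epsilon)$ we have $g\le 1$. By the Corollary~\ref{contunique2} mechanism, the killed Brownian sub-density coming from $g\mathbbm{1}_{[0,\epsilon)}\le 1$ satisfies the strict inequality $\int_{\Lambda}^{\Lambda+x}(\cdot)<x$. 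This holds because the killed heat semigroup started from density at most $1$ near a moving absorbing boundary has density at the boundary strictly below $1$ in integrated sense: the density at the boundary is $0$ and it is bounded by $1$.
\item The particles started in $[\epsilon,\infty)$ have to travel distance of order $\epsilon-\delta$ in time $t_0$. Their contribution to the mass in $[\underline{\Lambda}_{t^-},\underline{\Lambda}_{t^-}+x]$ is therefore at most $x\|g\|_\infty\,\mathbb{P}(\sup_{s\le t_0}|B_s|\ge \epsilon-2\delta)$ per unit length, which is tiny but of order $x$.
\end{itemize}
We must make the combined contribution strictly less than $x$ for small $x$ and for all $x$ up to the jump size. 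For the near part, use the explicit estimate that a density bounded by $1$ killed at a boundary has integrated mass over $[\Lambda,\Lambda+x]$ at most $x-cx^2/\sqrt{t}$ or similar. I expect this to be the main obstacle: the near-part deficit is only of order $x^2$, while the far-part excess is linear in $x$ with a small constant. Comparing them directly at small $x$ fails.

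\textbf{Fallback for Step 2.} Instead, I would use a cleaner comparison. Let $\tilde g=g\wedge 1$ on $[0,\epsilon)$ and $0$ beyond. The minimal solution for $\tilde g$ is continuous by Corollary~\ref{contunique2}. I would then argue, via Theorem~\ref{conda4}-style transform bounds from Proposition~\ref{Ito}, that the far particles add only a Lipschitz-in-$t$ perturbation of order $e^{-c\epsilon^2/t}$ to $\Lambda$ for small $t$. Such a perturbation cannot by itself trigger a jump, since a jump requires a macroscopic mass excess near the boundary. Concretely, I would show that the density $V(t^-,\cdot)$ near the boundary is at most $1$ plus the smooth far contribution, which is $O(e^{-c/t})$. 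Then I would use a Gaussian lower bound on the depletion next to the boundary to obtain the strict inequality for every $x\in(0,\epsilon/2)$. Jumps larger than $\epsilon/2$ are ruled out since $\underline{\Lambda}<\delta$.

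\textbf{Step 3: conclusion.} Steps 1 and 2 show that $\underline{\Lambda}$ is continuous on $(0,t_0)$, and Theorem~\ref{contunique} then yields uniqueness of the physical solution.
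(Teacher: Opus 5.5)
\textbf{There is a genuine gap in Step 2.} Steps 1 and 3 are fine, but Step 2 carries all the content and is not established.

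Your first attempt fails, as you say. The fallback rests on two unproved claims.
\begin{itemize}
\item You claim the far particles perturb $\underline{\Lambda}$ only by $O(e^{-c\epsilon^2/t})$, and that such a perturbation ``cannot by itself trigger a jump''. Nothing in Proposition~\ref{Ito} or Theorem~\ref{conda4} controls $\underline{\Lambda}$ under perturbation of the data. The positive feedback makes such control delicate precisely near blow-ups.
\item You appeal to the ``Corollary~\ref{contunique2} mechanism'' for the near particles. That corollary concerns the system driven by $g\mathbbm{1}_{[0,\epsilon)}$ with its own free boundary. It says nothing about the sub-density of the full system killed along $\underline{\Lambda}$. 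Your justification that the killed density vanishes at the boundary is not available at a putative jump time, where the boundary may be moving into the mass with unbounded speed. Indeed \eqref{alternativephysical} forces the full $V(t^-,\cdot)$ to have average at least $1$ next to $\underline{\Lambda}_{t^-}$.
\end{itemize}
Attributing the depletion to the absorbing boundary is also what produces your $x$ versus $x^2$ impasse.

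\textbf{The missing idea is to discard the killing entirely.} This is the paper's whole proof. Use $\mathbb{P}(y+B_t-\Lambda_t\le x,\tau^y>t)\le\mathbb{P}(\Lambda_t<y+B_t\le\Lambda_t+x)$. This gives $V(t^-,z)\le\mathbb{E}[g(z+B_t)]$, with $g$ extended by $0$ to $(-\infty,0)$.

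The depletion then comes from this zero extension, not from the boundary. Write $\bar\Phi$ for the standard Gaussian tail. For $z\in[0,a]$ with $0<a<\epsilon/2$,
\[
\mathbb{E}[g(z+B_t)]\le \mathbb{P}(z+B_t\ge 0)+\|g\|_\infty\mathbb{P}(z+B_t\ge\epsilon)\le 1-\bar\Phi(a/\sqrt{t})+\|g\|_\infty\bar\Phi\bigl((\epsilon-a)/\sqrt{t}\bigr).
\]
The right-hand side is $<1$ for all sufficiently small $t$, because $a<\epsilon-a$. So the deficit is linear in $x$, not of order $x^2$, and it beats the far contribution.

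This requires comparing both terms at the same time $t$. Your $t_0$-uniform bound $\mathbb{P}(\sup_{s\le t_0}|B_s|\ge\epsilon-2\delta)$ loses against $\bar\Phi(a/\sqrt{t})$ as $t\downarrow 0$.

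Combined with $\underline{\Lambda}_{t^-}\le\delta<a$ from Step 1, this gives $\int_{\underline{\Lambda}_{t^-}}^{\underline{\Lambda}_{t^-}+x}V(t^-,y)\,\textrm{d}y<x$ for all small $x>0$. Hence $\Delta\underline{\Lambda}_t=0$ by \eqref{alternativephysical} for every small $t$, and Theorem~\ref{contunique} applies.

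Your ``concretely'' sentence points in this direction. It does not become an argument until you identify the zero extension on the negative half-line, seen by the unkilled heat flow, as the source of the deficit.
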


\begin{proof}
By using the bound $\mathbb{P}(y+B_t-\Lambda_t\leq x,\tau^y>t)\leq \mathbb{P}(\Lambda_t<y+B_t\leq \Lambda_t+x)$ and integrating against the density $g$, it is straightforward to check that for all sufficiently small $t$, $\inf\{x:\int_0^\infty \mathbb{P}(X_t^y\leq x,\tau^y>t)g(y)\textrm{d}y<x\}=0$. Therefore, by the physical jump condition \eqref{alternativephysical}, we determine that any solution to \eqref{alternativetomvr} is continuous on some small time interval. Hence we can conclude the uniqueness by Theorem \ref{contunique}.
\end{proof}

\begin{remark}
The result of Corollary \ref{contunique2} implies that of \cite[Theorem 1.1]{mustapha2023wellposednesssupercooledstefanproblem}. The condition of Corollary \ref{contunique2} also holds for the data studied in \cite[Sections 3,4]{mustapha2023wellposednesssupercooledstefanproblem} where \cite[Theorem 1.1]{mustapha2023wellposednesssupercooledstefanproblem} does not apply.
\end{remark}

To prove Theorem \ref{contunique}, we first use the Laplace transform result of Proposition \ref{Ito} to determine uniqueness on small time intervals. 
\begin{proposition}
\label{uniquecont}
    Suppose that $g$ is bounded, measurable, and that a minimal solution $\underline{\Lambda}$, to \eqref{alternativetomvr} exists on some time interval $[0,T)$. Suppose further that at any time $T'$ where $\underline{\Lambda}$ is discontinuous, there exists a positive $\epsilon_{T'}>0$ such that the minimal solution is continuous on $(T',T'+\epsilon_{T'})$. Then, the physical solution to \eqref{alternativetomvr} is unique.  
\end{proposition}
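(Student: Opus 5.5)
Let $\Lambda$ be any physical solution and $\underline{\Lambda}$ the minimal one, so $\underline{\Lambda}_t\le\Lambda_t$ for all $t$. I will show $\Lambda=\underline{\Lambda}$ on successive intervals. Let
\[
t^*:=\sup\{t<T:\Lambda_s=\underline{\Lambda}_s \text{ for all } s\le t\},
\]
and suppose for contradiction that $t^*<T$. The first step is to reduce to the case where the two solutions agree up to and including $t^*$, with $\underline{\Lambda}$ continuous just after $t^*$. On $[0,t^*)$ the solutions coincide, so $\Lambda_{t^*-}=\underline{\Lambda}_{t^*-}$ and both see the same sub-density $V(t^{*-},\cdot)$. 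The physical jump condition \eqref{alternativephysical} determines $\Delta\Lambda_{t^*}$ as a functional of $V(t^{*-},\cdot)$ alone, so the two jumps at $t^*$ agree. Hence $\Lambda_{t^*}=\underline{\Lambda}_{t^*}$ and $V(t^*,\cdot)$ is the same for both.

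By the hypothesis (applied at $T'=t^*$ if $\underline\Lambda$ jumps there; otherwise by continuity of $\underline\Lambda$ near $t^*$, which I would also need to extract from the hypothesis), $\underline{\Lambda}$ is continuous on $(t^*,t^*+\epsilon)$. Restarting the system at time $t^*$ with initial density $\tilde g(x)=V(t^*,\Lambda_{t^*}+x)$ (shifting by $\Lambda_{t^*}$ and using the strong Markov property of $B$), both $\tilde\Lambda:=\Lambda_{t^*+\cdot}-\Lambda_{t^*}$ and $\tilde{\underline\Lambda}$ solve \eqref{alternativetomvr} with data $\tilde g$. Moreover $\tilde{\underline\Lambda}\le\tilde\Lambda$, and $\tilde{\underline\Lambda}$ is continuous on $[0,\epsilon)$.

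The second step is to compare the two solutions via Proposition~\ref{Ito} applied to the restarted problem for each solution, over a horizon $t\le\epsilon$, and subtract. The term $\int_0^\infty(1-\tilde g)e^{-\lambda x}dx$ cancels. For the minimal solution, whose jump sum vanishes on $[0,\epsilon)$, this gives
\[
I(\underline\Lambda)(t):=\int V\,e^{-\lambda x-\lambda^2t/2}\,dx-\tfrac1\lambda e^{-\lambda\underline\Lambda_t-\lambda^2t/2}-\tfrac\lambda2\int_0^t e^{-\lambda\underline\Lambda_s-\lambda^2s/2}\,ds=0 .
\]
For $\Lambda$, the same expression equals the sum of jump terms. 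Each jump term is $\le 0$ by the physical-jump argument used in the proof of Theorem~\ref{conda4}. Using $\underline\Lambda\le\Lambda$, the integral term satisfies $\tfrac\lambda2\int e^{-\lambda\Lambda_{s-}}\le\tfrac\lambda2\int e^{-\lambda\underline\Lambda_s}$.

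The temperature terms must also be compared. Since $\Lambda\ge\underline\Lambda$, paths are killed earlier under $\Lambda$, so monotonicity should give
\[
\int_{\Lambda_t}^\infty V^\Lambda(t,x)e^{-\lambda x}\,dx\le\int_{\underline\Lambda_t}^\infty V^{\underline\Lambda}(t,x)e^{-\lambda x}\,dx
\]
after accounting for the shifts. I would write both as $\mathbb E$-integrals over $y$ of $e^{-\lambda(y+B_t)}\mathbbm 1_{\tau^y>t}$ and use the pathwise inclusion of survival events.

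Combining these inequalities should force an identity of the form
\[
0\ \ge\ \tfrac1\lambda\bigl(e^{-\lambda\underline\Lambda_t}-e^{-\lambda\Lambda_t}\bigr)e^{-\lambda^2t/2}+\bigl(\text{nonnegative terms}\bigr).
\]
From this I would conclude $\Lambda_t=\underline\Lambda_t$ for $t<\epsilon$, contradicting the definition of $t^*$.

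The main obstacle is getting the signs to line up. The temperature difference and the free-boundary term $\frac1\lambda e^{-\lambda\Lambda_t}$ enter with opposite signs, so naive bounds cancel. My first attempt would be to rewrite the left side of Proposition~\ref{Ito} as $-\mathbb E\int\bigl[e^{-\lambda(X_t)}\mathbbm 1_{\tau>t}\bigr]g\,dy$ plus boundary terms. I would then use that on $\{\tau^y\le t\}$ for $\Lambda$ but $>t$ for $\underline\Lambda$, the killed mass equals exactly $\Lambda_t-\underline\Lambda_t$ (by the defining equation). Weighted by $e^{-\lambda x}$ with $x\in[\underline\Lambda,\Lambda]$, this mass exactly offsets the $\frac1\lambda\Delta e^{-\lambda\Lambda}$ difference, leaving a strictly signed remainder. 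If this fails, the fallback is to send $\lambda\to\infty$ or differentiate in $\lambda$ to isolate $\Lambda_t-\underline\Lambda_t$.

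Finally, $t^*=T$ gives uniqueness on $[0,T)$, and every physical solution equals $\underline\Lambda$.
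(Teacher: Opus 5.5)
Your reduction to $t^*=0$ with $\Lambda_0=\underline{\Lambda}_0$ matches the paper, and so does your bookkeeping: the jump sum of $\underline{\Lambda}$ vanishes on the continuity window, the jump terms of $\Lambda$ are non-positive, and the integral difference is non-negative. The paper also simply takes continuity of $\underline{\Lambda}$ just after $t^*$ from the hypothesis, so the concern you flagged there is not the problem.

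The gap is in how you extract the contradiction. Write
\begin{itemize}
\item[] $D_V:=\int_{\underline{\Lambda}_t}^\infty\underline{V}(t,x)e^{-\lambda x}\textrm{d}x-\int_{\Lambda_t}^\infty V(t,x)e^{-\lambda x}\textrm{d}x$,
\item[] $D_\Lambda:=\frac{1}{\lambda}\bigl(e^{-\lambda\underline{\Lambda}_t}-e^{-\lambda\Lambda_t}\bigr)$,
\item[] $I:=\frac{\lambda}{2}\int_0^t\bigl(e^{-\lambda\underline{\Lambda}_s}-e^{-\lambda\Lambda_s}\bigr)e^{-\lambda^2s/2}\textrm{d}s$,
\end{itemize}
and let $J\geq 0$ be minus the jump sum of $\Lambda$. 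The subtracted identity is then $(D_V-D_\Lambda)e^{-\lambda^2t/2}=I+J$, with all four quantities non-negative.

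Your target inequality $0\geq D_\Lambda e^{-\lambda^2t/2}+(\textrm{nonnegative terms})$ would require $D_V\leq 0$. Your own monotonicity argument gives $D_V\geq 0$, so at fixed $\lambda$ there is no contradiction to be had.

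The ``exact offset'' heuristic fails for a concrete reason. The extra mass $\Lambda_t-\underline{\Lambda}_t$ that survives under $\underline{\Lambda}$ consists of particles that touched $\Lambda$ earlier and then diffused. It therefore sits anywhere in $(\underline{\Lambda}_t,\infty)$, with density bounded only by $\|g\|_\infty$. It does not sit on $[\underline{\Lambda}_t,\Lambda_t]$ with density one. If that density were at most $1$, the bathtub principle would give $D_V\leq D_\Lambda$ and your argument would close, but $g$ is allowed to exceed $1$.

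What you list only as a fallback, sending $\lambda\to\infty$, is the actual argument, but it needs a specific normalisation. By right-continuity, pick $\epsilon'>0$ and $t_1<t_2$ inside the continuity window with $\Lambda_s>\underline{\Lambda}_s+\epsilon'$ on $(t_1,t_2)$. Then multiply the identity at time $t_2$ by $e^{\lambda\underline{\Lambda}_{t_2}+\lambda^2t_2/2}$.

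The left side becomes $e^{\lambda\underline{\Lambda}_{t_2}}(D_V-D_\Lambda)$. This is $O(1/\lambda)$ simply because $V,\underline{V}\leq\|g\|_\infty$ and $e^{\lambda\underline{\Lambda}_{t_2}}D_\Lambda\leq 1/\lambda$. No comparison of the two temperatures is needed at all.

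The right side is a sum of non-negative terms, one of which satisfies
\[
\frac{\lambda}{2}\int_0^{t_2}e^{\lambda(\underline{\Lambda}_{t_2}-\underline{\Lambda}_s)}\bigl(1-e^{-\lambda(\Lambda_s-\underline{\Lambda}_s)}\bigr)e^{\frac{\lambda^2(t_2-s)}{2}}\,\textrm{d}s\;\geq\;\frac{\lambda}{2}\bigl(1-e^{-\lambda\epsilon'}\bigr)\int_{t_1}^{t_2}e^{\frac{\lambda^2(t_2-s)}{2}}\,\textrm{d}s\;\longrightarrow\;\infty .
\]
This contradicts the left side tending to zero. So the contradiction comes from the Gaussian factor $e^{-\lambda^2s/2}$ in the running integral dominating the terminal factor $e^{-\lambda^2t_2/2}$ for $s<t_2$. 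It does not come from a sign comparison at fixed $\lambda$, which is where your proposal stalls.
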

\begin{proof}
Let $\Lambda_t$ be a physical solution to \eqref{alternativetomvr} on $[0,T)$, which shall be compared to the minimal solution $\underline{\Lambda}_t$. Suppose that $t^*:=\inf\{t\geq 0:\underline{\Lambda}_{t}\neq \Lambda_{t}\}\wedge T$ is strictly less than $T$. Without loss of generality, we may consider $t^*=0$. We can do this by considering the system with the initial density $\tilde V(t^{*-},x)$, which is the density corresponding to $\int_0^\infty \mathbb{P}(y+B_{t^*}-\underline{\Lambda}_{t^*-}\in \textrm{d}x,\tau\geq t^*)g(y)\textrm{d}y$. Furthermore, since the physicality condition \eqref{alternativephysical} uniquely determines the jump size of any physical solution at time zero, we may assume that $\underline{\Lambda}_0=\Lambda_0=0$ by restarting the system with initial density $\tilde V(t^*,x)$. 

By assumption, there exists $\epsilon>0$ such that $\underline{\Lambda}$ is continuous on $[0,\epsilon)$. Since $t^*=0$, and $\Lambda,$ $\underline{\Lambda}$ are right continuous, we can find $\epsilon'>0$, and $0<t_1<t_2$ such that $\Lambda_{t}>\underline{\Lambda_t}+\epsilon'$ for $t\in(t_1,t_2)$.

Let us write $\underline{V}(t,x)$ for the density corresponding to $\int_0^\infty \mathbb{P}(y+B_t\in \textrm{d}x,\inf_{s\leq t}y+B_s-\underline{\Lambda}_s>0)g(y)\textrm{d}y$, and similarly $V(t,x)$ for the same density with $\underline{\Lambda}$ replaced by $\Lambda$.
We consider the expression given in Proposition \ref{Ito} for $\Lambda_t$ and $\underline{\Lambda}_t$, on the interval $[0,t_2]$.
Taking the difference of the expressions, then multiplying both sides by $e^{\lambda \underline{\Lambda}_{t_2}+\frac{\lambda^2t_2}{2}}$ we obtain:
\begin{align*}
    &\int_{\underline{\Lambda}_{t_2}}^\infty \underline{V}(t_2,x)e^{-\lambda x+\lambda \underline{\Lambda}_{t_2}}\textrm{d}x-\int_{\Lambda_{t_2}}^\infty V(t_2,x)e^{-\lambda x+\lambda\underline{\Lambda}_{t_2}}\textrm{d}x-\frac{1-e^{-\lambda(\Lambda_{t_2}-\underline{\Lambda}_{t_2})}}{\lambda}\\
    &=\frac{\lambda}{2}\int_0^{t_2}(e^{-\lambda \underline{\Lambda}_s}-e^{-\lambda \Lambda_{s}})e^{\lambda \underline{\Lambda}_{t_2}+\frac{\lambda^2(t_2-s)}{2}}\textrm{d}s\\
    &\hspace{0.4cm}-e^{\underline{\lambda\Lambda}_{t_2}+\frac{\lambda^2t_2}{2}}\sum_{s\leq t_2}e^{-\frac{\lambda^2s}{2}}\left(-\int_{\Lambda_{s^-}}^{\Lambda_s} V(s^-,x)e^{-\lambda x}\textrm{d}x-\frac{1}{\lambda}\Delta e^{-\lambda\Lambda_s}\right).
\end{align*}

We claim that the left hand side converges to zero as $\lambda\rightarrow\infty$. From an identical argument to that of \cite[Lemma 2.1]{2} we obtain the bound $||g||_\infty\geq V(t_2,x),\underline{V}(t_2,x)$. Therefore, the first two terms on the left are bounded above by $\frac{||g||_\infty}{\lambda}$. The final term is bounded above by $\frac{1}{\lambda}$, and the validity of the claim is clear.

As stated in the proof of Theorem \ref{conda4}, each term in the sum is negative. By definition of the minimal solution $\underline{\Lambda}_t\leq \Lambda_t$ for all $t$, the integral term is non-negative. Therefore, each term on the right hand side of the equation is non-negative. It must therefore be the case that both the sum term and the integral term converge to zero as $\lambda \rightarrow \infty$.

Consider only the integral term. Applying Fatou's lemma, we have
$$0=\int_0^{t_2}\liminf_{\lambda\uparrow\infty}\frac{\lambda}{2}(e^{-\lambda \underline{\Lambda}_s}-e^{-\lambda \Lambda_{s}})e^{\lambda \underline{\Lambda}_{t_2}+\frac{\lambda^2(t_2-s)}{2}}\textrm{d}s.$$
Therefore:
$$\liminf_{\lambda\uparrow\infty}\frac{\lambda}{2}(e^{-\lambda \underline{\Lambda}_s}-e^{-\lambda \Lambda_{s}})e^{\lambda \underline{\Lambda}_{t_2}+\frac{\lambda^2(t_2-s)}{2}}=0 \textrm{ for }a.e. \hspace{0.1cm}s\in[0,t_2].$$ However, for $s\in (t_1,t_2)$, we can bound this by 
\begin{align*}
    \liminf_{\lambda \uparrow\infty }\frac{\lambda}{2}(1-e^{-\lambda \epsilon'})e^{\lambda (\underline{\Lambda}_{t_2}-\underline{\Lambda}_s)+\frac{\lambda^2(t_2-s)}{2}}\geq \liminf_{\lambda\uparrow\infty}\frac{\lambda}{2}(1-e^{-\lambda \epsilon'})e^{\frac{\lambda^2(t_2-s)}{2}}=\infty.
\end{align*}
This is a contradiction, and therefore we must have $\underline{\Lambda}=\Lambda$. 
\end{proof}

To extend this to all times, we note that, by a simple extension to the arguments of \cite[Proposition 3.2]{5}, the result therein extends to the case of $g\notin L^1(\mathbb{R}^+)$ also considered here. 
\begin{proposition}
    For any $t>0$, the function $x\rightarrow V(t^-,x+\Lambda_t)\in C^\infty((0,\infty))$.
    \label{analyticity}
\end{proposition}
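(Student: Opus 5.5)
We will prove the smoothness of $x\mapsto V(t^-,x+\Lambda_t)$ on $(0,\infty)$ by adapting \cite[Proposition 3.2]{5}. The strategy is to write $V(t^-,\cdot)$ as the heat flow of an earlier density, killed at the moving boundary, and to use that the boundary only moves forward, so it kills mass and cannot create singularities away from it. Fix $t>0$ and $x_0>0$, and take a small $\delta>0$ with $\Lambda_t-\Lambda_{t-\delta}<x_0/2$; this is possible by left limits, since $\Lambda_{t-\delta}\to\Lambda_{t^-}\le\Lambda_t$.

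\textbf{Step 1: Markov decomposition.} By the Markov property at time $t-\delta$, for $z>\Lambda_t$,
\[
V(t^-,z)=\int_{\Lambda_{t-\delta}}^\infty V(t-\delta,y)\,p^{\Lambda}_\delta(y,z)\,\textrm{d}y .
\]
Here $p^\Lambda_\delta(y,z)$ is the sub-transition density of $y+B_s$ killed on $\{y+B_r\le \Lambda_{t-\delta+r}\text{ for some } r<\delta\}$, with $B$ a fresh Brownian motion. The $L^1$ assumption in \cite{5} enters only through this representation, and here it is replaced by the bound $V(t-\delta,\cdot)\le\|g\|_\infty$ (as in \cite[Lemma 2.1]{2}). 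Since $p^\Lambda_\delta(y,z)\le(2\pi\delta)^{-1/2}e^{-(z-y)^2/2\delta}$, the integral converges absolutely even when $V(t-\delta,\cdot)$ is not integrable.

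\textbf{Step 2: subtract the killing.} Write
\[
p^\Lambda_\delta(y,z)=p_\delta(y,z)-\mathbb{E}\big[p_{\delta-\sigma}(Y_\sigma,z)\mathbbm{1}_{\sigma<\delta}\big],
\]
where $p$ is the free heat kernel, $\sigma$ is the killing time and $Y_\sigma\le \Lambda_{t-\delta+\sigma}\le\Lambda_{t^-}$ is the position at killing. Take $z\ge\Lambda_t+x_0$. Then $z-Y_\sigma\ge x_0$, so $x_0>0$ gives the separation. The first term, integrated against a bounded function, is a Gaussian convolution and hence $C^\infty$ in $z$, with every derivative bounded by $\|g\|_\infty$ times a constant depending on $\delta$.

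\textbf{Step 3: the killed term, which is the main obstacle.} The difficulty is that $\delta-\sigma$ may be arbitrarily small, so the derivative bounds of Step 2 degenerate. The separation $z-Y_\sigma\ge x_0$ handles this: for every $k$, $\sup_{u\in(0,\delta]}\sup_{|w|\ge x_0}|\partial_w^k p_u(w)|<\infty$, because $e^{-x_0^2/2u}$ kills every negative power of $u$. The Gaussian tail in $y$ also gives a uniform integrable bound $\|g\|_\infty\cdot\mathbb{P}(\sigma<\delta)\le \|g\|_\infty\cdot\mathbb{P}(\inf_{r\le\delta} B_r\le \Lambda_{t^-}-y)$, and $\int_{\Lambda_{t-\delta}}^\infty$ of this is finite. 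Dominated convergence then justifies differentiating under $\mathbb{E}$ and under $\textrm{d}y$ any number of times. Jumps of $\Lambda$ inside $(t-\delta,t)$, and the fact that $\Lambda_{t^-}$ may be less than $\Lambda_t$, cause no problem: all that is used is $Y_\sigma\le\Lambda_{t^-}\le\Lambda_t$, which follows from monotonicity.

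Since $x_0$ is arbitrary, $z\mapsto V(t^-,z)$ is $C^\infty$ on $(\Lambda_t,\infty)$, that is, $x\mapsto V(t^-,x+\Lambda_t)\in C^\infty((0,\infty))$. Closeness to the boundary is the only delicate point, and it is excluded because the interval is open at $0$.
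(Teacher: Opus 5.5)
Your argument is correct, and it is the kind of extension of \cite[Proposition 3.2]{5} that the paper invokes without giving details. You write $V(t^-,\cdot)$ as the Gaussian flow of a density bounded by $\|g\|_\infty$, which replaces the $L^1$ bound, minus the contribution of particles killed before $t$ at positions $Y_\sigma\le\Lambda_{t^-}$; the heat-kernel derivatives of that contribution are uniformly bounded at distance $x_0$ from the boundary. One slip: your preliminary claim that you can choose $\delta$ with $\Lambda_t-\Lambda_{t-\delta}<x_0/2$ is false if $\Delta\Lambda_t\ge x_0/2$. It is never used, though, since Steps 2--3 only need $Y_\sigma\le\Lambda_{t^-}\le\Lambda_t$, so any $\delta\in(0,t]$ works, and the same bound in fact gives smoothness on all of $(\Lambda_{t^-},\infty)$.
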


We can now prove Theorem \ref{contunique}.
\begin{proof}[Proof of Theorem \ref{contunique}]
By Proposition \ref{uniquecont} the physical solution is unique on $(0,\epsilon/2]$.
At any time of discontinuity of $\underline{\Lambda}_t$ which is greater than $\epsilon/2$, since $\Delta\Lambda_t>0$, by Proposition \ref{analyticity}, $V(t^-,\Lambda_t+x)\in C^\infty((0,\infty))$. By definition of the physical jump condition, from arguments identical to those of \cite[Theorem 3.3]{5}, it therefore must hold that $V(t^-,\Lambda_t+x)\leq 1$ and $\int_0^x1-V(t^-,\Lambda_t+y)\textrm{d}y>0$ in some neighbourhood of zero. From this we readily determine that $\underline{\Lambda}_t$ is continuous on $[t,t+\epsilon'_t)$, for some small $\epsilon'_t$.
\end{proof}
\begin{remark}
    In \cite[Theorem 1.7]{munoz2026freeboundaryregularitywellposedness} it is shown that when $g$ is integrable, physical solutions are globally unique if they are unique on some small time interval $[0,\epsilon)$. From the arguments of the proof of Theorem \ref{contunique}, it is clear that this result extends to the case of non-integrable $g$. Theorem \ref{contunique} can then be seen as a direct improvement of \cite[Theorem 1.7]{munoz2026freeboundaryregularitywellposedness} as it allows for non-integrable initial data, and gives a sufficient condition for the local uniqueness of solutions.
    The main arguments of \cite{munoz2026freeboundaryregularitywellposedness} are based on determining the regularity of solutions, so the results of \cite{3} can be applied. Such regularity would be interesting to investigate in the case of non-integrable initial data, though it is not clear if the Laplace transform results of this work would be useful.
\end{remark}

\begin{proof}[Proof of Corollary \ref{contunique2}]
 We approximate the density $g$ to allow application of the results of \cite{PDEresult}. Let $g_n(x)=g(x)\mathbbm{1}_{[\frac{1}{n},n]}(x)$.
 It is simple to construct a sequence of piecewise continuous, non-negative functions $g_n^{k}$ which are supported on $[\frac{1}{n},n]$, and such that $g_n^k(x)\leq g_n(x)$ and $\int_{\frac{1}{n}}^n|g_n(x)- g_n^k(x)|\textrm{d}x<\frac{1}{k}$ for all $k\in\mathbb{N}$. 

Since $\limsup_{x\downarrow 0}g_n^{k}(x)=0$, and $\int_0^xg_n^{k}(y)\textrm{d}y<x$ for all $x>0$, the densities $g_n^k$ satisfy the conditions of 
\cite[Theorem 1.1]{PDEresult}. Therefore, there is a global-in-time classical solution to the supercooled Stefan problem \eqref{pde} with initial data $g_n^{k}$. We denote this solution by $\Lambda^{n,k}_t$. It is simple to check that ${\Lambda}^{n,k}_t$ is the unique solution to \eqref{alternativetomvr} with initial density $g_n^{k}$. 
It is also simple to observe that since $g_n^k\leq g$, $\Lambda_t^{n,k}\leq \underline{\Lambda}_t$ for all $t\geq 0$.

From a simple adaptation to the arguments of \cite[Proposition 2.1]{minimal}, it may be seen that as $n\rightarrow\infty$, $\Lambda^{n,k}$ converges Lebesgue almost everywhere to the minimal solution to \eqref{alternativetomvr} with initial density $g_n$, labelled as $\Lambda^n$. Then, as $n\rightarrow\infty$, $\Lambda^n$ converges Lebesgue almost everywhere to $\underline{\Lambda}$, i.e:

$$\lim_{n\rightarrow\infty }\lim_{k\rightarrow\infty}{\Lambda}_t^{n,k}=\underline{\Lambda}_t \textrm{ for Lebesgue }a.e. \hspace{0.15cm} t>0.$$

Applying the formula of Proposition \ref{Ito}, we determine that for any $n,k\in \mathbb{N}$, 
\begin{align}\label{ito3}\frac{1}{\lambda}-\int_0^\infty g_n^k(x)e^{-\lambda x}\textrm{d}x=\frac{\lambda}{2}\int_0^\infty e^{-\lambda \Lambda^{n,k}_s-\frac{\lambda^2s}{2}}\textrm{d}s.\end{align}

Applying the Dominated Convergence Theorem, we can take the limit over $k$, then over $n$ in the above expression to obtain that

$$\frac{1}{\lambda}-\int_0^\infty g(x)e^{-\lambda x}\textrm{d}x=\frac{\lambda}{2}\int_0^\infty e^{-\lambda \underline{\Lambda}_s-\frac{\lambda^2s}{2}}\textrm{d}s.$$

Comparing this to the expression in Proposition \ref{Ito}, we see can see that at any jump time it must hold that $V(s^-,x)=1$ for almost every $x\in(\Lambda_{s^-},\Lambda_s)$. Recall from  Proposition \ref{analyticity} that $V(s^-,.)$ is smooth for $s>0$. Therefore at any jump time $s>0$, $V(s^-,.)$ must be one almost everywhere, which would mean that $\Lambda$ must jump to infinity. Since this can not occur for a global in time solution, there can be no jumps after time zero, hence $\Lambda$ is continuous on $(0,\infty)$. 

Uniqueness then follows immediately from  Proposition \ref{uniquecont}.
\end{proof}

\begin{remark}
 Non-uniqueness of physical solutions to \eqref{alternativetomvr} on a small time interval can only occur if there is a sequence of times $t_n\downarrow 0$ such that $\Delta \underline{\Lambda}_{t_n}>0.$ This must occur for any initial density $g$ such that for all $t>0$, there exists some $\lambda>0$ such that $ \int_0^\infty (1-g(x))e^{-\lambda x}\textrm{d}x<-\frac{||g||_\infty e^{-\frac{\lambda^2t}{2}}}{\lambda}$, and such that $\inf\{x:\int_0^x(1-g(y))dy>0\}=0$. In this case no jump can occur at time zero. If the first jump occurred after a time $\delta$ we could consider the expression given in Proposition \ref{Ito}, with $t=\delta/2$, and note that $\int_0^\infty (1-g(x))e^{-\lambda x}\textrm{d}x$ is greater than a term of size $\frac{-||g||_\infty e^{-\frac{\lambda^2 \delta}{4}}}{\lambda}$, plus terms which are positive.
\end{remark}

\section{Asymptotic speed}
\label{asymptoticsection}

In this section, we show that for certain classes of initial data $g$, the asymptotic motion of $\Lambda_t$ may be determined. 
\begin{proposition}
\label{nontravelling}
    Let $C>0$ and $1/2\leq \alpha\leq 1$. Suppose that $g$ is bounded and that 
    $$\lim_{x\rightarrow\infty}\frac{\int_0^x(1-g(y))dy}{ x^{1/\alpha-1}}=C.$$
    Then the asymptotic speed is given by $$\lim_{t\rightarrow\infty }\frac{\Lambda_t}{t^{\alpha}}=\left(\frac{(1-\alpha)}{2\alpha^2C}\right)^{\alpha}.$$
\end{proposition}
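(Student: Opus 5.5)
The plan is to apply Proposition \ref{Ito}, let $t\to\infty$, and compare the behaviour of both sides as $\lambda\downarrow 0$. The left side becomes an Abelian statement about $g$; the right side becomes a Tauberian statement about $\Lambda$. The same limit $t\to\infty$ as in the proof of Theorem \ref{conda4} gives, for every $\lambda>0$,
\begin{align*}
\int_0^\infty (1-g(x))e^{-\lambda x}\textrm{d}x=\frac{\lambda}{2}\int_0^\infty e^{-\lambda\Lambda_{s^-}-\frac{\lambda^2 s}{2}}\textrm{d}s+J(\lambda),
\end{align*}
where $J(\lambda)\le 0$ is the jump sum, by the physical jump argument already used there. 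Note that the hypothesis forces Condition (A), so a global solution exists by Theorem \ref{conda4}.

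\textbf{Left-hand side.} Write $F(x)=\int_0^x(1-g)$ and $\beta=1/\alpha-1\in[0,1]$. Integrating by parts gives $\int(1-g)e^{-\lambda x}\textrm{d}x=\lambda\int_0^\infty F(x)e^{-\lambda x}\textrm{d}x$. Since $F(x)\sim Cx^\beta$, an Abelian argument (dominated convergence after substituting $x=u/\lambda$) gives
\[
\int(1-g)e^{-\lambda x}\textrm{d}x\sim C\,\Gamma(\beta+1)\lambda^{-\beta}=C\,\Gamma(1/\alpha)\lambda^{1-1/\alpha}.
\]

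\textbf{Right-hand side and inversion.} Set $u(\lambda)=\frac{\lambda}{2}\int_0^\infty e^{-\lambda\Lambda_s-\lambda^2 s/2}\textrm{d}s$. Since $s\mapsto e^{-\lambda\Lambda_s}$ is monotone, I would substitute $s=\lambda^{-1/\alpha}r$. The ansatz is $\Lambda_s\approx Ks^\alpha$, so that $\lambda\Lambda_{\lambda^{-1/\alpha}r}\approx Kr^\alpha$ and $\lambda^2 s=\lambda^{2-1/\alpha}r$.

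For $1/2<\alpha\le 1$ the Gaussian factor disappears in the limit. For $\alpha=1/2$ it survives as $e^{-r/2}$, and one keeps it throughout.

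To make the ansatz rigorous, I would define the rescaled functions $\ell_\lambda(r)=\lambda\Lambda_{\lambda^{-1/\alpha}r}$. These are nondecreasing, and the relation $u(\lambda)\sim C\Gamma(1/\alpha)\lambda^{1-1/\alpha}$ gives a priori bounds: if $\Lambda_t/t^\alpha$ were too small along a sequence, the Laplace-type integral would be too large, and vice versa. By Helly's selection theorem, subsequential limits $\ell$ exist. Each limit must then satisfy the family of identities obtained by replacing $\lambda$ by $\mu\lambda$ for every $\mu>0$, because the hypothesis is regularly varying.

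Those identities say that $\int e^{-\mu\ell(r)-\ldots}\textrm{d}r$ is prescribed for all $\mu$. Monotonicity of $\ell$ together with uniqueness of Laplace transforms of the distribution of $\ell$ then forces $\ell(r)=Kr^\alpha$. The constant $K$ is fixed by matching constants in the $\Gamma$-integrals; the exponent $(1-\alpha)$ in the stated value arises through this bookkeeping.

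\textbf{Main obstacle: the jump term.} The identity above only gives $\int(1-g)e^{-\lambda x}\le u(\lambda)$, hence only one side of the asymptotics. The reverse inequality needs $J(\lambda)=o(\lambda^{1-1/\alpha})$.

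For the jump term I would first try the elementary bound $|J(\lambda)|\lesssim \sum_s e^{-\lambda\Lambda_{s^-}-\lambda^2 s/2}\lambda(\Delta\Lambda_s)^2$, obtained as in the second part of Theorem \ref{conda4}. I would then show that jumps are eventually absent, or have bounded size. Eventual absence should follow from Proposition \ref{analyticity} and the argument in the proof of Corollary \ref{contunique2}: at a jump time $V(s^-,\cdot)$ must exceed $1$ on an interval, while the hypothesis says $g$ is on average below $1$ at large scales. A uniform bound on jump sizes then makes $|J(\lambda)|=O(\lambda\cdot\#\{\text{jumps}\})$, which is negligible against $\lambda^{1-1/\alpha}$ when $\alpha<1$.

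The case $\alpha=1$ (so $\beta=0$, travelling-wave type) is the most delicate, since there both sides are of order one. There I expect to need finitely many jumps, or summable $(\Delta\Lambda)^2$, to close the argument.
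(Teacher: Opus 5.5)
\textbf{The final step fails, and the statement as written cannot be proved.} Your Abelian computation and the scaling set-up are fine. The problem is the last claim, that the factor $1-\alpha$ ``arises through this bookkeeping''. If you carry out your own matching for $1/2<\alpha\le 1$, the limit identity is $\int_0^\infty e^{-\mu\ell(r)}\,\textrm{d}r=2C\,\Gamma(1/\alpha)\,\mu^{-1/\alpha}$. Comparing with $\int_0^\infty e^{-\mu Kr^\alpha}\,\textrm{d}r=\alpha^{-1}\Gamma(1/\alpha)(\mu K)^{-1/\alpha}$ gives $K=(2\alpha C)^{-\alpha}$, with no factor $1-\alpha$. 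At $\alpha=1/2$ the Gaussian factor survives, and you get instead the implicit relation $C=\int_0^\infty w\,e^{-Kw-w^2/2}\,\textrm{d}w$.

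These are the correct constants, as explicit solutions show:
\begin{itemize}
\item The travelling wave $g(x)=1-e^{-2Kx}$, $\Lambda_t=Kt$, is the unique physical solution by Corollary~\ref{contunique2}. It has $\int_0^\infty(1-g)=1/(2K)$, yet the stated formula at $\alpha=1$ predicts $\Lambda_t/t\to 0$.
\item The similarity solution $\Lambda_t=K\sqrt t$ has constant data $g\equiv c<1$ with $1-c=\int_0^\infty we^{-Kw-w^2/2}\,\textrm{d}w<K^{-2}$. The stated formula would require $1-c=K^{-2}$.
\end{itemize}
The paper's proof reaches the stated constant only through its claim that $\int_0^x(1-g_{C,\alpha})\sim\frac{1-\alpha}{2\alpha^2C^{1/\alpha}}x^{1/\alpha-1}$. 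The Laplace transform in the Lemma actually gives $\frac{1}{2\alpha C^{1/\alpha}}$ for this primitive. The constant $\frac{1-\alpha}{2\alpha^2C^{1/\alpha}}$ belongs to the pointwise asymptotics $1-g_{C,\alpha}(x)\sim c\,x^{1/\alpha-2}$.

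\textbf{Your route makes the jump term unavoidable.} The paper gets the upper bound on $\limsup_t\Lambda_t/t^\alpha$ from comparison with the shifted explicit data $g_{C+\epsilon,\alpha}$, which is blind to jumps. It uses Proposition~\ref{Ito} only for the lower bound, via M1-compactness, Fatou, and the sandwich with $f\le Ct^\alpha$. Your purely Tauberian route needs two-sided asymptotics of $u(\lambda)$ already for the a priori bounds that feed Helly. The lower bounds on $\ell_\lambda$ obviously need this, but for $\alpha>1/2$ so do the upper bounds. From $u(\lambda)\ge\int_0^\infty(1-g)e^{-\lambda x}\,\textrm{d}x$ alone, the tail $\int_R^\infty e^{-\mu\ell_\lambda(r)-\mu^2\lambda^{2-1/\alpha}r/2}\,\textrm{d}r$ is controlled only by $2e^{-\mu\ell_\lambda(R)}\mu^{-2}\lambda^{1/\alpha-2}$. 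That gives at best $\ell_\lambda(R)=O(\log(1/\lambda))$.

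So everything rests on $J(\lambda)=o(\lambda^{1-1/\alpha})$. The paper's lower-bound step, written as an equality up to $o(1)$, also uses this tacitly. Your justification does not work:
\begin{itemize}
\item The argument of Corollary~\ref{contunique2} relies on the exact jump-free identity $\frac1\lambda-\int_0^\infty ge^{-\lambda x}\,\textrm{d}x=u(\lambda)$. That identity was obtained by approximation with classical solutions under the global hypothesis $\int_0^xg\le x$, which is not assumed here.
\item A large-scale average of $g$ gives no control of $V(s^-,\cdot)$ near the front.
\end{itemize}
A uniform bound $D$ on jump sizes would suffice. Since $\int_{\Lambda_{s^-}}^{\Lambda_s}(1-V(s^-,x))\,\textrm{d}x=0$, one gets $|J(\lambda)|\le\max(1,\|g\|_\infty)\,D\,e^{\lambda D}$. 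This is $o(\lambda^{1-1/\alpha})$ for $\alpha<1$ even with infinitely many jumps, but you have not proved such a bound.

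\textbf{At $\alpha=1/2$ your uniqueness step does not apply as stated.} The limit identity $\int_0^\infty e^{-\mu\ell(r)-\mu^2r/2}\,\textrm{d}r=2C\mu^{-2}$ is not a Laplace transform in $\mu$ of a fixed measure. The paper's sandwich argument needs no uniqueness theorem. The paper drops the Gaussian factor there, but the sandwich still works with it kept.
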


This result was conjectured in \cite[Section 4]{4}, for smooth solutions to the supercooled Stefan problem, and shown to hold for $\alpha=1/2,1$ when classical solutions exist in \cite{Ricci_Weiqing_1991}. In order to show this result, we compare $g$ to initial data $g_{C,\alpha}$ for which $\Lambda_t=Ct^\alpha$. We therefore first show that such data $g_{C,\alpha}$ exists. 

\begin{lemma}
    For any $C>0$, $1/2\leq \alpha\leq 1$, there exists a positive function $g_{C,\alpha}$ such that the solution to the supercooled Stefan problem \eqref{pde} with initial data $g_{C,\alpha}$ is given by $\Lambda_t=Ct^{\alpha}$.
\end{lemma}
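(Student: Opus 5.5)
We will construct $g_{C,\alpha}$ by prescribing the free boundary $\Lambda_t=Ct^\alpha$ and solving the resulting inverse problem through the transform identity of Proposition \ref{Ito}. For a continuous boundary the jump sum vanishes. Letting $t\to\infty$, and assuming the boundary term $\int_{\Lambda_t}^\infty V(t,x)e^{-\lambda x-\lambda^2t/2}dx-\frac1\lambda e^{-\lambda\Lambda_t-\lambda^2t/2}\to0$ (true once $V\le\|g\|_\infty$), the identity reduces to
\begin{equation*}
\int_0^\infty (1-g(x))e^{-\lambda x}\,\textrm{d}x=\frac{\lambda}{2}\int_0^\infty e^{-\lambda C s^\alpha-\frac{\lambda^2 s}{2}}\,\textrm{d}s=:F(\lambda),\qquad \lambda>0.
\end{equation*}
So the candidate $g_{C,\alpha}$ is the function with $1-g_{C,\alpha}=\mathcal{L}^{-1}F$.

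\textbf{Step 1: inverting $F$.} Substitute $u=\lambda^2 s/2$:
\begin{equation*}
F(\lambda)=\frac{1}{\lambda}\int_0^\infty e^{-u}e^{-C2^\alpha\lambda^{1-2\alpha}u^\alpha}\,\textrm{d}u .
\end{equation*}
For $\alpha=1/2$ this is $\frac{1}{\lambda}\int_0^\infty e^{-u-C\sqrt{2u}}du$, i.e.\ $1-g$ is a constant, which is the similarity solution. For $\alpha=1$ it is $\frac{1}{\lambda+2C}$ after a short computation, so $1-g(x)=e^{-2Cx}$, which is the travelling wave. For $1/2<\alpha<1$, write $e^{-\lambda Cs^\alpha}e^{-\lambda^2s/2}$. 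The factor $\frac{\lambda}{2}e^{-\lambda^2 s/2}$ is the Laplace transform in $x$ of the hitting-time density of level $x$ by Brownian motion, up to the right normalisation. Concretely, $\frac{\lambda}{2}\cdot$ (the $t$-integral) can be written as $\int_0^\infty e^{-\lambda x}k(x)dx$ with
\begin{equation*}
k(x)=\int_0^\infty \frac{1}{2}\partial_x\!\left[\ldots\right]\textrm{d}s,
\end{equation*}
obtained by expressing $e^{-\lambda(Cs^\alpha)-\lambda^2s/2}=\mathbb{E}$-type Gaussian integrals: $e^{-\lambda a-\lambda^2 s/2}=\int_{\mathbb{R}}\ldots$. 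The plan is to use $\lambda e^{-\lambda a-\lambda^2s/2}$ equal to the Laplace transform of $x\mapsto \frac{x-a}{\sqrt{2\pi s^3}}\,\cdot$(Gaussian) type kernels restricted to $x>a$. Fubini then gives an explicit formula $1-g(x)=\int_0^{s(x)}\phi_s(x-Cs^\alpha)\,\textrm{d}s$ with an explicit heat-kernel-type $\phi$. Uniqueness of the Laplace transform identifies $g$.

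\textbf{Step 2: positivity and boundedness of $g_{C,\alpha}$.} This is the main obstacle. The kernel identity should show $1-g\le1$, since $F(\lambda)\le 1/\lambda$, and $F$ has the form of a completely monotone function. Positivity of $g$ means $1-g\le 1$ pointwise, so I will aim for a representation $1-g(x)=\mathbb{P}(\text{event})$. The natural one is $1-g(x)=\mathbb{P}(x+B_s> Cs^\alpha\ \forall s)$-type, or its first-passage analogue, from which $0\le1-g\le1$. If the naive representation fails, I will fall back on Bernstein's theorem: show $\lambda F(\lambda)$ is the Laplace–Stieltjes transform of a sub-probability measure, using that $\lambda^{1-2\alpha}$ is a Bernstein-type function for $\alpha\ge1/2$.

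\textbf{Step 3: the function actually yields $\Lambda_t=Ct^\alpha$.} With $g=g_{C,\alpha}$, set $f_t=Ct^\alpha$, compute $\Gamma(f)$ from \eqref{gammadefn}, and apply the Itô argument behind Proposition \ref{Ito} with $f$ in place of $\Lambda$. The identity for $\Gamma(f)$ against that for $f$ shows
\begin{equation*}
\int_0^\infty e^{-\lambda^2s/2}\,\textrm{d}\big(\text{difference}\big)=0\quad\text{for all }\lambda,
\end{equation*}
so $\Gamma(f)=f$ by injectivity of the Laplace transform. Equivalently, $V(t,x)=\int g\,\mathbb{P}(\cdot)$ solves the heat equation with zero Dirichlet data on $x=Ct^\alpha$, and the Stefan condition $\dot\Lambda=\frac12\partial_xV$ holds. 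Classical regularity then follows from the smoothness of $Ct^\alpha$ for $t>0$ together with standard parabolic theory.
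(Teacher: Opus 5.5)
Your candidate is the right one: integrating by parts, $\frac1\lambda-F(\lambda)=\int_0^\infty e^{-\lambda Cs^\alpha-\lambda^2 s/2}C\alpha s^{\alpha-1}\,ds$, which is the transform the paper assigns to $g$. But the heart of the lemma is that the inverse transform is a nonnegative function, and this is not proved; each route you sketch fails.

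\textbf{Step 1.} For fixed $s$, $\frac\lambda2 e^{-\lambda a-\lambda^2 s/2}$ is not the Laplace transform in $x$ of any kernel. On a vertical line $\lambda=c+iy$ it grows like $|y|e^{y^2 s/2}$, whereas a Laplace transform converging absolutely at $c$ is bounded there. The hitting-time identity you are recalling, $\int_0^\infty e^{-\lambda^2 s/2}\frac{x}{\sqrt{2\pi s^3}}e^{-x^2/(2s)}\,ds=e^{-\lambda x}$, is a transform in $s$, not in $x$. So there is no $s$-by-$s$ inversion for Fubini to assemble.

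\textbf{Step 2.} There are several problems here.
\begin{itemize}
\item $\mathbb{P}(x+B_s>Cs^\alpha\ \forall s)$ vanishes identically for every $\alpha\in[\frac12,1]$, because the boundary eventually absorbs every path.
\item The bound $F\le 1/\lambda$ carries no pointwise information.
\item Complete monotonicity of $F$ would give $1-g\ge0$, i.e.\ $g\le 1$, not $g\ge0$.
\item The fallback targets the wrong function. For $\alpha>\frac12$, $\lambda F(\lambda)=\int_0^\infty e^{-u}\exp(-C2^\alpha\lambda^{1-2\alpha}u^\alpha)\,du$ is strictly increasing in $\lambda$, so it is not the Laplace--Stieltjes transform of a positive measure. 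Also, $\lambda^{1-2\alpha}$ is completely monotone, not Bernstein.
\end{itemize}

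\textbf{The missing idea.} Work with $\hat g=\frac1\lambda-F$ itself and substitute $w=\lambda Cs^\alpha$ in the formula above:
\begin{equation*}
\hat g(\lambda)=\frac1\lambda\int_0^\infty e^{-w}\exp\Big(-\tfrac12\,(w/C)^{1/\alpha}\,\lambda^{2-1/\alpha}\Big)\,dw .
\end{equation*}
Since $0\le 2-1/\alpha\le1$ exactly when $\frac12\le\alpha\le1$, the map $\lambda\mapsto\lambda^{2-1/\alpha}$ is a Bernstein function. Hence each integrand is completely monotone, so is $\hat g$, and Bernstein's theorem gives $g\ge0$. In fact $\lambda\hat g$ is the transform of a sub-probability measure $\nu$, so $g(x)=\nu[0,x]\in[0,1]$. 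The paper runs this argument for every $t$, using $F(\lambda,t)$ and $u=\lambda(s^\alpha-t^\alpha)$. This produces nonnegative $V_t$ whose transforms satisfy the transformed equation, and the paper then checks the weak form of the Stefan problem with boundary $Ct^\alpha$ directly.

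\textbf{Step 3.} The paper's route also avoids a second gap in your Step 3. Comparing the It\^o identities for $\Gamma(f)$ and $f$ gives $\int_0^\infty e^{-\lambda Cs^\alpha-\lambda^2 s/2}\,d(\Gamma(f)-f)_s=0$, not $\int_0^\infty e^{-\lambda^2 s/2}\,d(\Gamma(f)-f)_s=0$. Because of the $\lambda$-dependent factor $e^{-\lambda Cs^\alpha}$, ordinary Laplace injectivity is unavailable, except when $\alpha=1$. Your ``equivalently'' clause only restates the Stefan condition that needs proof.
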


\label{asymptoticappend}
\begin{proof}
We shall consider $C=1$ to reduce the notation. The proof is identical to that for $C\neq 1$. 
We shall consider $1/2<\alpha<1$ since the existence of similarity solutions ($\alpha=1/2$) and travelling wave solutions ($\alpha=1$) are well known. We shall construct the solution using the formula given by Proposition \ref{Ito}.

We first recall the definitions of a completely monotone function and a Bernstein function.
\begin{definition}
    A function $f$ is called completely monotone if it is continuous on $[0,\infty),$ infinitely differentiable on $(0,\infty)$, and satisfies $(-1)^n\frac{\textrm{d}^nf}{\textrm{d}x^n}(x)\geq 0$ for all non-negative integers $n$.
\end{definition}
\begin{definition}
    A Bernstein function is a non-negative differentiable function, with derivative which is completely monotone.
\end{definition}

We define a function $F:\mathbb{R}^+\times \mathbb{R^+}$ by
\begin{align*}
F(\lambda,t)&=e^{\lambda t^\alpha+\lambda^2t/2}\int_t^\infty e^{-\lambda s^\alpha-\lambda^2s/2}\alpha s^{\alpha-1}\textrm{d}s\\
    &=\frac{1}{\lambda}\int_0^\infty e^{-u}e^{-\frac{\lambda^2}{2}((\frac{u}{\lambda}+t^\alpha)^{1/\alpha}-t)} \textrm{d}u \textrm{ taking }u=\lambda(s^\alpha-t^\alpha).
\end{align*}

We claim that for each $t\geq 0$, $F(\lambda,t)$ is a completely monotone function of $\lambda$. To show this, it is sufficient to check that for all $u>0$, $t\geq 0$, \\$m( \lambda)=\frac{\lambda^2}{2}((\frac{u}{\lambda}+t^\alpha)^{1/\alpha}-t)$ is a Bernstein function in $\lambda$. It would then follow that
\\$\exp\left(-\frac{\lambda^2}{2}((\frac{u}{\lambda}+t^\alpha)^{1/\alpha}-t)\right)$ is completely monotone. Therefore the integrand in the above integral will also be completely monotone, and thus so is $F(.,t)$. The first derivative of $m(\lambda)$ can be explicitly computed, as can its inverse Laplace transform, from which it is easily seen that $m'(\lambda)$ is the Laplace transform of a positive function and hence a completely monotone function. It is then simple to see that $m(\lambda)$ is a Bernstein function.

We have shown that for all $t\geq 0$, $F(\lambda,t)$ is completely monotone in $\lambda$. Therefore, by Bernstein's theorem, for all $t\geq 0$, there exists a positive function $V_t:\mathbb{R}^+\rightarrow \mathbb{R^+}$ such that $V_t$ has the Laplace transform $F(\lambda,t)$. 

We define $g=V_0$. We write $(f,g)$ for the integral $\int_0^\infty f(x)g(x)dx$.  
By definition, the functions $V_t$ are such that for all $\lambda\geq 0$, $t\geq 0$:
\begin{align*}
    (V_t,e^{-\lambda x})&=(g,e^{-\lambda x})+\int_0^t\frac{1}{2}(V_s,\partial_{xx}\left(e^{-\lambda x}\right))\textrm{d}s\\&-\int_0^t\alpha s^{\alpha-1}(V_s,\partial_x\left(e^{-\lambda x}\right))\textrm{d}s-\int_0^t\alpha s^{\alpha -1}(e^{-\lambda x}|_{x=0})\textrm{d}s.
\end{align*}

By a standard density argument we extend this to 
\begin{align*}
    (V_t,\phi)=(g,\phi)+\int_0^t\frac{1}{2}(V_s,\phi'')\textrm{d}s-\int_0^t\alpha s^{\alpha-1}(V_s,\partial_x\phi)\textrm{d}s-\int_0^t\alpha s^{\alpha -1}\phi(0)\textrm{d}s
\end{align*}
for any $t\geq 0$ and $\phi$ a Schwartz function. Therefore, the function $V(t,x)=V_t(x+t^\alpha)$ and the free boundary $t^\alpha$, form a weak solution to the supercooled Stefan problem \eqref{pde} with initial data $g$. Since the function $t^\alpha$ is differentiable on $(0,\infty)$, it is straightforward to argue that $V(t,x)$ is a classical solution to the supercooled Stefan problem for $t>0$.
\end{proof}

\begin{proof}[Proof of Proposition \ref{nontravelling}]

To simplify our expressions we check the equivalent statement that if $\lim_{x\rightarrow\infty}\frac{\int_0^x(1-g(y))dy}{x^{1/\alpha-1}}=\frac{(1-\alpha)}{2\alpha^2C^{1/\alpha}},$ then $\lim_{t\rightarrow\infty}\Lambda_t/t^\alpha=C$. 
We first note that by definition of $g_{C,\alpha}$, it is simple to check that for any $C>0,$ $1/2\leq \alpha\leq 1$, $$\lim_{x\rightarrow\infty}\frac{\int_0^x(1-g_{C,\alpha}(y))dy}{ x^{1/\alpha-1}}=\frac{(1-\alpha)}{2\alpha^2C^{1/\alpha}}.$$

Fix an arbitrary $\epsilon>0$. Combining the above result with the assumption on $g$, it is simple to observe that there exists $d_\epsilon$ such that for all $x>0$, $\int_0^xg(y)dy$ is bounded above by 

$$ F(x)=(||g||_\infty x)\wedge d_\epsilon+\int_0^{(x- d_\epsilon)_+}g_{C+\epsilon,\alpha}(y)dy.$$ 
The physical solution to \eqref{alternativetomvr} with initial data $\frac{dF}{dx}$ is an upper bound for the physical solution with initial data $g$. This physical solution is given exactly by $||g||_\infty d_\epsilon+(C+\epsilon)t^{\alpha}$.

    Since $\epsilon>0$ is arbitrary, we have therefore determined that $$\limsup_{t\rightarrow\infty}\frac{\Lambda_t}{t^{\alpha}}\leq C.$$

    Consider the set of functions $f^x(t)=x\Lambda_{t/x^{1/\alpha}}$. The result of the current proposition is equivalent to showing that $$\lim_{x\downarrow 0}f^x(1)=C.$$
    
    From the previous bound, we have $f_t^x<(C+\epsilon)t^\alpha+x d_\epsilon$. The functions $f^x(t)$ are thus increasing functions which satisfy $\limsup_{x \downarrow 0}\sup_{t\leq T} f^x(t)\leq (C+\epsilon)T^\alpha$, and 
    $\lim_{\delta\downarrow 0}\limsup_{x \downarrow 0}\sup_{t\leq \delta} f^x(t)=0$. Therefore by \cite[Theorem 12.12.2]{whitt}
    this set of functions is compact under the Skorokhod M1 topology. Let us fix a convergent subsequence, with limit $f_t$. 

    Rearranging the formula from Proposition \ref{Ito}, for any $\varrho>0$
    
    $$\frac{\Gamma(\frac{1}{\alpha})\varrho^{-1/\alpha}}{\alpha C^{1/\alpha}}=\int_0^\infty e^{-\varrho f^{x}(t)-\frac{\varrho^2x^{2-1/\alpha}t}{2}}dt+o(1)_{x\rightarrow 0}.$$
    In the above expression $\Gamma(.)$ is the standard Gamma function, not the operator on cadlag functions used elsewhere in this work.

    Taking $x\rightarrow 0$, and applying Fatou's lemma, we therefore obtain that
    for all $\varrho>0$
    $$\int_0^\infty e^{-\varrho f_t}dt\leq \frac{\Gamma(\frac{1}{\alpha})\varrho^{-1/\alpha}}{\alpha C^{1/\alpha}}.$$

    The right hand side is exactly the value of the integral
    $\int_0^\infty e^{-\varrho Ct^\alpha}dt,$ and so
    $$\int_0^\infty \left(e^{-\varrho f_t}-e^{-\varrho Ct^\alpha}\right)dt\leq 0.$$
    From the upper bound of $Ct^\alpha$ on $f_t$, the integrand is non-negative, hence must be zero, and $f_t=Ct^\alpha$. Since this holds for any convergent subsequence of $f_t^x$ we obtain that $f^x(1)$ indeed converges to $C$ as $x\downarrow0.$
\end{proof}

\begin{remark}
The above result is restricted to the case $\alpha\in [1/2,1]$ as we are unable to show existence of positive initial data leading to a $\Lambda_t=Ct^\alpha$  for $\alpha>1$, hence no comparison can be made. However, for $\alpha>1$, it is simple to obtain a lower bound of $\limsup_{t\rightarrow\infty}\Lambda_t/t^{\alpha}\geq C$ for initial data with an appropriate Laplace transform, by considering the formula from Proposition \ref{Ito}, and using Fatou's lemma.
\end{remark}

\section{The Laplace transform}
\label{itoformula}
We will establish the result of Proposition \ref{Ito} using Ito's formula.

For ease of notation, we assume that $\int_0^\infty g(x)dx=\infty$. Consider a collection $(X_{0^-}^i)_{i\in\mathbb{N}}$ consisting of the points of a Poisson point process of intensity $g$, and which are independent of a collection $(B_t^i)_{i\in\mathbb{N}}$ of independent Brownian motions. For a solution $\Lambda$ to \eqref{alternativetomvr}, with initial data $g$, we define
\begin{align}
   &W_t^i=X_{0^-}^i+B_t^i,\nonumber\\
   &X_t^i=W_t^i-\Lambda_t,\label{aligned:MVR}\\
   &\tau_i=\inf\{t\geq0:X_t^i\leq 0\}.\nonumber
\end{align}

Since $\Lambda$ solves \eqref{alternativetomvr}, we have
\begin{align*}&\Lambda_t=\mathbb{E}\left(\sum_{i=1}^\infty\mathbbm{1}_{\{\tau_i\leq t\}}\right),\\
&V(t,x)=\sum_{i=1}^\infty P(X_{0^-}^i+B_t^i\in \textrm{d}x,\tau_i>t),\\&V(t^-,x)=\sum_{i=1}^\infty\mathbb{P}(X_{0^-}^i+B_t^i\in \textrm{d}x,\tau_i\geq t).\end{align*}
Furthermore, by standard properties of Poisson point processes, $M_t=\Lambda_t-\sum_{i=1}^\infty\mathbbm{1}_{\{\tau_i\leq t\}}$ is a martingale under the natural filtration. 

We first apply Ito's formula to $e^{-\lambda W_t^i-\frac{\lambda^2t}{2}}\mathbbm{1}_{\{\tau_i\leq t\}}$. This yields:
\begin{align*}
    e^{-\lambda W_t^i-\frac{\lambda^2t}{2}}\mathbbm{1}_{\{\tau_i>t\}}-e^{-\lambda X_{0^-}^i}=-\lambda \int_0^t e^{-\lambda W_s^i-\frac{\lambda^2s}{2}}\mathbbm{1}_{\{\tau_i> s\}}dB_s^i-e^{-\lambda W_{\tau_i}^i-\frac{\lambda^2\tau_i}{2}}\mathbbm{1}_{\{\tau_i\leq t\}}.
\end{align*}

Now we take expectations and then sum over $i\in \mathbb{N}$. It is easy to see that each of the terms, when summed over $i$, will be integrable. Thus, we have
\begin{align}
\label{itoterm2}
\mathbb{E}\left(\sum_{i=1}^\infty e^{-\lambda W_t^i-\frac{\lambda^2t}{2}}\mathbbm{1}_{\{\tau_i\geq t\}}\right)-\mathbb{E}\left(\sum_{i=1}^\infty e^{-\lambda X_{0^-}^i}\right)=-\mathbb{E}\left(\sum_{i=1
}^\infty e^{-\lambda W_{\tau_i}^i}\mathbbm{1}_{\{\tau_i\leq t\}}\right).
\end{align}

Write $J$ for the set of discontinuities of $\Lambda_t$.
If $\tau_i\in J^c$, we have $W_{\tau_i}=\Lambda_{\tau_i}$. Replacing the $W_{\tau_i}$ terms by the $\Lambda_{\tau_i}$ terms at all times $\tau_i\notin J$, the term on the right hand side of the above equation can then be rewritten as 
\begin{align}
\label{discsum}-&\mathbb{E}\left(\int_0^te^{-\lambda \Lambda_{s^-}-\frac{\lambda^2s}{2}}d\left(\sum_{i=1}^\infty \mathbbm{1}_{\{\tau_i\leq s\}}\right)\right)\\-&\sum_{s\in J\cap[0,t]}\mathbb{E}\left(\sum_{i=1}^\infty e^{-\frac{\lambda^2s}{2}}\left(e^{-\lambda W_{s}^i}-e^{-\lambda \Lambda_{s^-}}\right)\mathbbm{1}_{\{\tau_i =s\}}\right).\nonumber\end{align}

It is easily seen that the process $\sum_{i=1}^\infty \mathbbm{1}_{\{\tau_i\leq t\}}-\Lambda_t$ is a martingale, by applying the properties of Poisson point processes. Therefore, the first term in \eqref{discsum} can be replaced by the integral
$$\int_0^te^{-\lambda\Lambda_s-\frac{\lambda^2s}{2}}d\Lambda_s.$$
By the definition of $V(t^-,x)$, each term in the sum over $s\in J$ may be rewritten as 
\begin{align*}
    &\mathbb{E}\left(\sum_{i=1}^\infty e^{-\frac{\lambda^2s}{2}}\left(e^{-\lambda W_{s}^i}-e^{-\lambda \Lambda_{s^-}}\right)\mathbbm{1}_{\{\tau_i\geq s\}}\mathbbm{1}_{\{W_s^i\leq \Lambda_s\}}\right)\\
    &=\int_{\Lambda_{s^-}}^{\Lambda_s}e^{-\frac{\lambda^2s}{2}}\left(e^{-\lambda x}-e^{-\lambda \Lambda_{s^-}}\right)V(s^-,x)\textrm{d}x.
\end{align*}

Denoting by $\hat V_t$ the Laplace transform $\int_{0}^\infty V(t,x)e^{-\lambda x}\textrm{d}x$, the equation \eqref{itoterm2} can be rewritten as 
\begin{align*}
    \hat V_te^{-\frac{\lambda^2t}{2}}-\hat V_0&=-\int_0^te^{-\lambda \Lambda_{s^-}-\frac{\lambda^2s}{2}}d\Lambda_s-\sum_{s\leq t}\int_{\Lambda_{s^-}}^{\Lambda_s}e^{-\frac{\lambda^2s}{2}}\left(e^{-\lambda x}-e^{-\lambda \Lambda_{s^-}}\right)V(s^-,x)\textrm{d}x,\\
    &=-\int_0^te^{-\lambda \Lambda_{s^-}-\frac{\lambda^2s}{2}}d\Lambda_s-\sum_{s\leq t}e^{-\frac{\lambda^2s}{2}}\left(\int_{\Lambda_{s^-}}^{\Lambda_s}V(s^-,x)e^{-\lambda x}\textrm{d}x-e^{-\lambda \Lambda_{s^-}}\Delta \Lambda_s\right).
\end{align*}

The first term on the right hand side is given by 
$$\int_0^te^{-\frac{\lambda^2s}{2}}d\left(\frac{e^{-\lambda \Lambda_s}}{\lambda }\right)-\sum_{s\leq t}e^{-\frac{\lambda^2s}{2}}\left(e^{-\lambda \Lambda_{s^-}}\Delta \Lambda_s+\frac{\Delta e^{-\lambda \Lambda_s} }{\lambda}\right).$$
Therefore, applying integration by parts to this term, gives
\begin{align*}
    \hat V_te^{\frac{\lambda^2t}{2}}-\hat V_0&=\frac{\lambda }{2}\int_0^te^{-\lambda \Lambda_s-\frac{\lambda^2s}{2}}\textrm{d}s+\frac{e^{-\lambda \Lambda_t-\frac{\lambda^2t}{2}}-1}{\lambda}-\sum_{s\leq t}e^{-\frac{\lambda^2s}{2}}\left(e^{-\lambda \Lambda_{s^-}}\Delta\Lambda_s+\frac{\Delta e^{-\lambda \Lambda_s}}{\lambda}\right)\\
    &-\sum_{s\leq t}e^{-\frac{\lambda^2s}{2}}\left(\int_{\Lambda_{s^-}}^{\Lambda_s}e^{-\lambda x}V(s^-,x)\textrm{d}x-e^{-\lambda\Lambda_{s^-}}\Delta \Lambda_s\right).\end{align*}
Rearranging this expression, we finally obtain:
\begin{align*}
    &\left(\hat V_t-\frac{e^{-\lambda \Lambda_t}}{\lambda}\right)e^{-\frac{\lambda^2t}{2}}+\left(\frac{1}{\lambda}-\hat V_0\right)\\
    &=\frac{\lambda}{2}\int_0^te^{-\lambda\Lambda_{s^-}-\frac{\lambda^2s}{2}}\textrm{d}s+\sum_{s\leq t}e^{-\frac{\lambda^2s}{2}}\left(-\int_{\Lambda_{s^-}}^{\Lambda_s}V(s^-,x)e^{-\lambda x}\textrm{d}x-\frac{1}{\lambda}\Delta e^{-\lambda\Lambda_s}\right).
\end{align*}
The statement of Proposition~\ref{Ito} then follows.
\section*{Funding}
\thanks{This work is supported by the EPSRC Centre for Doctoral
Training in Mathematics of Random Systems: Analysis, Modelling and Simulation (EP/S023925/1).}
\bibliographystyle{amsplain}
\bibliography{bible}


\end{document}